\numberwithin{equation}{section}
\newtheorem{thm}{Theorem}
\newtheorem{remark}{Remark}
\algnewcommand{\Inputs}[1]{%
  \State \textbf{Inputs:}
  \Statex \hspace*{\algorithmicindent}\parbox[t]{.8\linewidth}{\raggedright #1}
}
\begin{document}
\title[A SAV based algorithm  for discrete gradient systems]{An efficient and robust SAV based algorithm  for discrete gradient systems arising from optimizations}%
\author{Xinyu Liu${}^1$, Jie Shen${}^1$, and  Xiaongxiong Zhang${}^1$}
\thanks{${}^1$Department of Mathematics, Purdue University, West Lafayette, IN 47907, USA  (liu1957$@$purdue.edu, shen7$@$purdue.edu, zhan1966$@$purdue.edu).}

%
%
\begin{abstract}
	We propose in this paper a new minimization algorithm based on a slightly modified version of the scalar auxialiary variable (SAV)  approach  coupled with a relaxation  step and an adaptive strategy. It enjoys several distinct advantages over popular gradient based methods: (i) it is  unconditionally energy  diminishing with a  modified energy which is intrinsically related to the original energy, thus no parameter tuning is needed for stability; (ii)  it  allows the use of  large step-sizes which can effectively accelerate the convergence rate. We also present a convergence analysis for some SAV based algorithms, which include our new algorithm without the relaxation step as  a special case.
	We apply our new algorithm to several illustrative and benchmark problems, and compare its performance with several  popular gradient based methods. The numerical results indicate that the new algorithm is very robust, and  its adaptive version usually converges significantly faster than those popular gradient descent based methods.  
\end{abstract}
\keywords{ discrete gradient system; optimization; scalar auxiliary variable; adaptive; stability; convergence}
\subjclass[2020]{65K10,49M05,90C26}
 \maketitle
 \baselineskip=15pt
\section{Introduction}\label{sec: intro}

Minimization plays an important role in many branches of science and engineering. In particular,  how to accelerate the convergence rate of the minimization process is a central issue  in data science and machine learning problems.
We consider in this paper an unconstrained minimization problem 
\begin{equation}\label{P}\min\limits_{\theta\in \mathbbm R^N} f(\theta)
\end{equation}which  arises in many {applications}, including particularly machine learning problems. For large scale minimization problems, {the  first order methods such as gradient descent, its variants such as stochastic gradient descent} \cite{robbins1951stochastic}, Nesterov's accelerated gradient descent \cite{nesterov2013introductory}, adaptive momentum
estimation method \cite{kingma2014adam, sashank2018convergence, guo2021novel}, are popular choices. We refer to \cite{nocedal2006numerical, nesterov2018lectures, ryu2022large}, and the references therein,  for more detail on the  design and analysis of gradient descent method and its various variants.




The vanilla gradient descent method for  \eqref{P} is 
\begin{equation}
\label{Vanilla-GD}
\theta_{k+1}=\theta_{k}- \eta_k \nabla f(\theta_k),
\end{equation}
 which can also be regarded as a numerical scheme for the gradient flow  
 \begin{equation}
 	\label{G}
 	\theta_t=-\nabla f(\theta),
 \end{equation} with time step $ \eta_k$. The gradient flow \eqref{G} is energy diminishing in the sense that 
$$\frac d{dt} f(\theta)=(\nabla f(\theta), \theta_t)=-(\theta_t, \theta_t ) =- \|\theta_t\|^2  \le 0,$$
 where $(\cdot,\cdot)$ (resp. $\|\cdot\|$) denotes the $l^2$ inner product (resp. norm).
However, gradient decent type schemes are {not necessarily} energy diminishing, and may blow up if the time step is too large.  Although the stability of gradient descent based methods is well understood, the main challenge in practice  is how to choose the step-size, i.e. learning rate, to balance between stability and efficiency \cite{ruder2016overview}.

We propose in this paper a different class of minimization algorithms inspired from the recently developed scalar auxiliary variable (SAV) approach for gradient flows \cite{SXY18,shen2019new}. The SAV approach enjoys a particular advantage of unconditional energy diminishing compared to popular gradient decent based methods. This advantage avoids tuning step sizes and allows the use of large step sizes, which may effectively accelerate the convergence rate. 

Assume the cost function has a splitting
\begin{equation}\label{splitting}
f(\theta)= \frac{1}{2}(\mathcal{L}\theta,\theta)+[f(\theta) - \frac{1}{2}(\mathcal{L}\theta,\theta)]:= 
 \frac{1}{2}(\mathcal{L}\theta,\theta)+g(\theta),
 \end{equation} where  $\mathcal{L}$ is {a self-adjoint positive semi-definite linear operator}.  Note that $\mathcal{L}=0$ is a trivial splitting.
 Then,  the gradient flow  \eqref{G} becomes 
\begin{equation}\label{grad_flow2}
\theta_t + \mathcal{L} \theta  + \nabla g(\theta) = 0.
\end{equation}
Inspired by the IEQ and SAV approaches \cite{yang2016linear,SXY18}, assuming there exists $C>0$ such that $g(\theta)>-C$ for all $\theta$,
we introduce a scalar auxiliary variable $r(t) =  \sqrt{g(\theta)+C}$, and   expand \eqref{grad_flow2} to:
\begin{equation}\label{sav-general2}
    \begin{cases}
    \theta_t + \mathcal{L} \theta  + \frac{\nabla g(\theta)}{\sqrt{g(\theta)+C}}r = 0, \\
    r_t = \frac{1}{2\sqrt{g(\theta)+C}}( \nabla g(\theta),  \theta_t).
    \end{cases}
\end{equation}
Obviously, with $r(0)=  \sqrt{g(\theta|_{t=0}) + C}$, the above system admits a solution $r(t) =  \sqrt{g(\theta)+ C}$ with $\theta$ being the solution of \eqref{grad_flow2}. The main advantage of  the expanded system, which includes an energy evolution  equation, is that it allows us to construct simple numerical schemes with modified energy diminishing. For example, 
the following scheme
\begin{equation}\label{scheme-general2}
    \begin{cases}
    \frac{\theta_{k+1} - \theta_{k}}{\delta t} +  \mathcal{L}  \theta_{k+1} + \frac{\nabla g(\theta_{k})}{\sqrt{g(\theta_{k})+C}}r_{k+1}  = 0, \\
    \frac{r_{k+1} - r_{k}}{\delta t} = \left( \frac{\nabla g(\theta_{k})}{2\sqrt{g(\theta_{k})+C}},  \frac{\theta_{k+1} - \theta_{k}}{\delta t}\right),
    \end{cases}
\end{equation}
 can be easily implemented by solving only two linear systems of the form $(I + \delta t\mathcal{L}) x=b$, and is unconditionally energy stable with a modified energy \cite{shen2019new}. 
 
 While the  scheme \eqref{scheme-general2} has been shown to be very effective for gradient flows, it is not particularly suitable for the minimization problem \eqref{P}. Indeed, for any fixed $\delta t$, assuming $\theta_{k} \rightarrow  \theta_*$ and $r_{k}\rightarrow r_*$, then $ \frac{r_k}{\sqrt{g(\theta_{k})+C}}\rightarrow \frac{r_{*}}{\sqrt{g(\theta_{*})+C}} $ which is usually not equal to 1, and consequently, the first equation of  \eqref{scheme-general2} leads to $$0= \mathcal{L} \theta _*+\frac{r_{*}}{\sqrt{g(\theta_{*})+C}} 
 \nabla g(\theta_*)=\mathcal{L} \theta _*+\frac{r_{*}}{\sqrt{g(\theta_{*})+C}} 
 \left(-\mathcal{L} \theta _*+ \nabla f(\theta_*)\right). $$  
  If $\mathcal{L}\neq 0$, we observe from the above that    in general $ \nabla f(\theta_*)\ne 0$ thus $\theta_*$ is not a solution for  \eqref{P}. Another complication  of this approach is that it is not obvious how to choose $\mathcal{L}$ such that $g(\theta)$ is  bounded from below for all $\theta$.
The main goal of this paper is to design  suitable SAV  based schemes for \eqref{P}, develop their convergence theory, and validate them through extensive numerical experiments.

The rest of the paper is organized as follows. In Section \ref{sec-rsav}, we first discuss  
 a suitable SAV algorithm for minimization,  introduce its relaxed version RSAV, and discuss the adaptive rule and choices for the non-negative operator $\mathcal{L}(\theta)$. Then, we present in Section \ref{sec: num}  several numerical results to show the performance of the RSAV in different optimization problems. 
We provide  a convergence study in Section \ref{sec-convergence}, some concluding remarks in
Section \ref{sec-remark}.

\section{A new SAV approach and its relaxed version}
\label{sec-rsav}
We have observed in the last section that the standard SAV approach is not suitable for the minimization problem \eqref{P}. In this section, we propose a different SAV approach and its related version  which are well suited for \eqref{P}.
\subsection{A modified SAV approach}
 We still assume the splitting \eqref{splitting}, and rewrite   \eqref{G} as
\begin{equation}\label{grad_flow}
	\theta_t + \mathcal{L} \theta  + \nabla f(\theta) - \mathcal{L} \theta = 0.
\end{equation}
Since $f(\theta)$ in a minimization problem should always  be bounded from below,  there exists $C>0$ such that $f(\theta)>-C$ for all $\theta$. We introduce  $r(t) =  \sqrt{f(\theta)+C}$, and   expand \eqref{grad_flow} to:
\begin{equation}\label{sav-general}
	\begin{cases}
		\theta_t + \mathcal{L} \theta  + \frac{\nabla f(\theta)}{\sqrt{f(\theta)+C}}r - \mathcal{L}\theta= 0, \\
		r_t = \frac{1}{2\sqrt{f(\theta)+C}}( \nabla f(\theta),  \theta_t).
	\end{cases}
\end{equation}
Then, a simple SAV scheme to approximate the above is
\begin{equation}\label{scheme-general}
	\begin{cases}
		\frac{\theta_{k+1} - \theta_{k}}{\delta t} +  \mathcal{L}  \theta_{k+1} + \frac{\nabla f(\theta_{k})}{\sqrt{f(\theta_{k})+C}}r_{k+1} - \mathcal{L}\theta_{k} = 0, \\
		\frac{r_{k+1} - r_{k}}{\delta t} = \left( \frac{\nabla f(\theta_{k})}{2\sqrt{f(\theta_{k})+C}},  \frac{\theta_{k+1} - \theta_{k}}{\delta t}\right).
	\end{cases}
\end{equation}
Note that if $\theta_{k} \rightarrow  \theta_*$ and $r_{k}\rightarrow r_*$, then \eqref{scheme-general} leads to $\nabla f(\theta_*)=0$, and consequently $\theta_*$ is a solution of \eqref{P}.

The scheme \eqref{scheme-general} leads to a coupled linear system for $(\theta_{k+1}, r_{k+1})$, but it can be implemented explicitly after solving a linear system  as will be shown in the Section \ref{sec-explicit-formula}.
Let $A=I+\delta t \mathcal L$, then \eqref{scheme-general} can be equivalently implemented as
\begin{align*}
{r}_{k+1} & = \frac{1}{1+\delta t  \frac{ ( \nabla f(\theta_k), A^{-1} \nabla f(\theta_k))}{2[f(\theta_k)+C]}}r_k,\\
	\theta_{k+1} & = \theta_k-\frac{{r}_{k+1} }{\sqrt{f(\theta_k)+C}} \delta t A^{-1}\nabla f(\theta_k).
\end{align*}

Moreover,	taking the discrete inner product of the first (resp. second) equation in \eqref{scheme-general} with $\theta_{k+1} - \theta_{k}$ (resp. $2r_{k+1}$), summing up the results, we obtain 
	the following:
	\begin{thm}\label{thm0}
		If $\mathcal{L} $ is non-negative, then for any $\delta t> 0$, the modified energy $r^2$ in the scheme  \eqref{scheme-RSAV}  is unconditionally  diminishing in the sense that
		\begin{equation*}
			r^2_{k+1}-  r^2_{k}= -\frac1{\delta t}\|{\theta_{k+1} - \theta_{k}}\|^2-( \mathcal{L}(\theta_{k+1} - \theta_{k}), (\theta_{k+1} - \theta_{k}))-(r_{k+1}-r_{k})^2.
		\end{equation*}
	\end{thm}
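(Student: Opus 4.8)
The plan is to follow exactly the recipe announced just before the statement: form two scalar identities by pairing each equation of the scheme with a well-chosen test quantity, and then add them so that the troublesome nonlinear SAV term cancels. First I would take the $l^2$ inner product of the first equation of the scheme with $\theta_{k+1}-\theta_k$. The time-derivative term yields $\frac1{\delta t}\|\theta_{k+1}-\theta_k\|^2$, and the two operator terms combine, using self-adjointness, into the single quadratic form $(\mathcal L(\theta_{k+1}-\theta_k),\theta_{k+1}-\theta_k)$. What remains is the cross term
\begin{equation*}
r_{k+1}\left(\frac{\nabla f(\theta_k)}{\sqrt{f(\theta_k)+C}},\,\theta_{k+1}-\theta_k\right),
\end{equation*}
so this first identity expresses that cross term as $-\frac1{\delta t}\|\theta_{k+1}-\theta_k\|^2-(\mathcal L(\theta_{k+1}-\theta_k),\theta_{k+1}-\theta_k)$.

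Next I would multiply the (scalar) second equation by $2r_{k+1}$. On the left I would apply the elementary polarization identity $2a(a-b)=a^2-b^2+(a-b)^2$ with $a=r_{k+1}$, $b=r_k$, turning $2r_{k+1}(r_{k+1}-r_k)$ into $r_{k+1}^2-r_k^2+(r_{k+1}-r_k)^2$; after clearing the common factor $\delta t$ the right-hand side is precisely the same cross term that appeared above. This deliberate matching of test functions — $\theta_{k+1}-\theta_k$ against the first equation and $2r_{k+1}$ against the second — is the whole point: it is engineered so that the SAV coupling term is common to both identities.

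I would then substitute the expression for the cross term obtained from the first identity into the second, eliminating it entirely. What survives is exactly the claimed relation,
\begin{equation*}
r_{k+1}^2-r_k^2=-\frac1{\delta t}\|\theta_{k+1}-\theta_k\|^2-(\mathcal L(\theta_{k+1}-\theta_k),\theta_{k+1}-\theta_k)-(r_{k+1}-r_k)^2.
\end{equation*}
The diminishing conclusion is then immediate: the first and third terms on the right are manifestly non-positive, and the middle term is non-positive because $\mathcal L$ is non-negative (self-adjoint positive semi-definite), so $r_{k+1}^2\le r_k^2$ for every $\delta t>0$.

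I do not anticipate a genuine obstacle here, since the argument is an exact discrete energy identity with no inequalities to estimate and no step-size restriction to track. The only point requiring care is the algebraic bookkeeping in the cancellation: one must make sure the factor of $\tfrac12$ in the denominator of the second equation combines correctly with the factor $2$ in the test multiplier $2r_{k+1}$ so that the two cross terms are truly identical (not merely proportional), and that the operator difference is handled via self-adjointness rather than by accidentally assuming $\mathcal L\theta_{k+1}$ and $\mathcal L\theta_k$ pair the same way. Once those bookkeeping matches are verified, the identity — and hence the unconditional monotonicity of the modified energy $r^2$ — follows directly.
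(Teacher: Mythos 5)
Your proof is correct and is exactly the paper's own argument: the paper derives Theorem~\ref{thm0} precisely by taking the inner product of the first equation of \eqref{scheme-general} with $\theta_{k+1}-\theta_k$ and of the second with $2r_{k+1}$, cancelling the common SAV cross term, and applying $2a(a-b)=a^2-b^2+(a-b)^2$. One minor bookkeeping remark: merging $\mathcal L\theta_{k+1}-\mathcal L\theta_k$ into $\mathcal L(\theta_{k+1}-\theta_k)$ requires only linearity of $\mathcal L$, not self-adjointness; the self-adjoint positive semi-definiteness is needed only at the end to conclude $(\mathcal L(\theta_{k+1}-\theta_k),\theta_{k+1}-\theta_k)\ge 0$ and hence $r_{k+1}^2\le r_k^2$.
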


	The above result shows the key advantage of \eqref{scheme-general}: the energy dissipation holds for any $\delta t> 0$ and any splitting with  non-negative  $\mathcal{L} $.

As we shall demonstrate {in numerical tests in Section \ref{sec: num}}, when the cost functional $f(\theta)$ has a suitable splitting, the above algorithm usually converge much faster the vanilla gradient decent method. When  the cost function does not have any obvious quadratic part,  we can either choose any suitable non-negative  linear operator $\mathcal{L}$ in \eqref{splitting}, 
or simply take $ \mathcal{L}=0$, which results in 
a fully explicit method:
\begin{equation}\label{sav-explicit}
	\begin{cases}
		\frac{\theta_{k+1} - \theta_{k}}{\delta t} +  \frac{\nabla f(\theta_{k})}{\sqrt{f(\theta_{k})+C}}r_{k+1} = 0, \\
		\frac{r_{k+1} - r_{k}}{\delta t} = \left( \frac{\nabla f(\theta_{k})}{2\sqrt{f(\theta_{k})+C}},  \frac{\theta_{k+1} - \theta_{k}}{\delta t}\right),
	\end{cases}
\end{equation}
which,  we refer as {\em the SAV gradient descent method}.  As will be shown in the Section \ref{sec-explicit-formula}, the scheme \eqref{sav-explicit}  can be decoupled and implemented as  
\begin{equation}
	\label{explicitiSAV-GD0}
	\begin{split}
		&	r_{k+1}=\frac{r_k}{1+\delta t   \frac{ (\nabla f(\theta_k),\nabla f(\theta_k)) }{2(f(\theta_k)+C)}} ,\\
		&\theta_{k+1}=\theta_k-{\delta t}\frac{r_{k+1}}{ \sqrt{f(\theta_k)+C}} \nabla f(\theta_{k}).
	\end{split}
\end{equation}
Compared with the vanilla gradient descent method \eqref{Vanilla-GD},  there are extra computational costs of { computing both $f(\theta_k)$} and $ (\nabla f(\theta_k),\nabla f(\theta_k)) $ in \eqref{explicitiSAV-GD0}, but  Theorem \ref{thm0} ensures stability for any $\delta t$. In contrast, $\delta t$ in \eqref{Vanilla-GD} needs to be small enough to ensure stability. 

\subsection{A relaxed version of the modified SAV approach}
While for fixed $\delta t$, the solution of the  SAV scheme \eqref{scheme-general} converges to a solution of the minimization problem \eqref{P}, the evolution of $r_{k+1}$ is not directly linked to $\sqrt{f({\theta}_{k+1})+C}$, and its value may decrease rapidly to ensure stability when  $\|\nabla f(\theta_k) - \mathcal{L}\theta_k\|$ becomes large. In this case, the ratio $\frac{r_{k+1}}{\sqrt{f({\theta}_{k+1})+C}}$ may deviate significantly from $1$,  which indicates that $r_{k+1}$ is no longer a good approximation of $\sqrt{f({\theta}(t_{k+1}))+C}$, thus $\theta_{k+1}$ will not be  a good approximation of $\theta(t_{k+1})$. For dynamic simulation of gradient flows, a remedy is to monitor the ratio $\frac{r_{k+1}}{\sqrt{f({\theta}_{k+1})+C}}$ and adjust the time step so that it stays close to 1.  For the minimization problem \eqref{P},  since we are mainly interested in the  steady steady state solutions of \eqref{G},  it is found in \cite{SZ_JCP20} that setting $r_{k+1}=\sqrt{f({\theta}_{k+1})+C}$ at each time step is also  very effective. More precisely, we can use the following modified SAV scheme:
\begin{equation}\label{mSAV}
    \begin{cases}
    \frac{\theta_{k+1} - \theta_{k}}{\delta t} +  \mathcal{L} \theta_{k+1} + \frac{\nabla f(\theta_{k})}{\sqrt{f({\theta}_{k})+C}}\tilde{r}_{k+1} - \mathcal{L}\theta_{k}= 0, \\
    \frac{\tilde{r}_{k+1} - r_{k}}{\delta t} = \left( \frac{\nabla f({\theta}_{k})}{2\sqrt{f({\theta}_{k})+C}},  \frac{\theta_{k+1} - \theta_{k}}{\delta t}\right),\\
    r_{k+1} = \sqrt{f({\theta}_{k+1})+C}.
    \end{cases}
\end{equation}   
However, the above modified SAV scheme is no longer energy diminishing. Recently, another way to link  $r_{k+1}$ with $\sqrt{f(\theta_{k+1})+C}$ while still being   energy diminishing is proposed in \cite{MR4254503} (see also \cite{jiang2022improving,zhang2022generalized}). When applied to \eqref{grad_flow}, the relaxed SAV method takes the following form:

\begin{equation}\label{scheme-RSAV}
    \begin{cases}
    \frac{\theta_{k+1} - \theta_{k}}{\delta t} +  \mathcal{L} \theta_{k+1} + \frac{\nabla f({{\theta}}_{k})}{\sqrt{f({\theta}_{k})+C}}\tilde{r}_{k+1} - \mathcal{L}\theta_{k} = 0, \\
    \frac{\tilde{r}_{k+1} - r_{k}}{\delta t} = \left( \frac{\nabla f({\theta}_{k})}{2\sqrt{f({\theta}_{k})+C}},  \frac{\theta_{k+1} - \theta_{k}}{\delta t}\right),\\
    r_{k+1} = \xi\tilde{r}_{k+1} + (1-\xi)\sqrt{f(\theta_{k+1})+C}.
    \end{cases}
\end{equation}
Here,  the relaxation parameter $\xi$ is a scalar chosen from the set
\begin{equation}\label{set}
    \mathcal{V}= \{\xi\in [0,1] :(r_{k+1})^2 - (\tilde{r}_{k+1})^2 - (\tilde{r}_{k+1} - r_{k})^2\leq  \eta \mathcal{G}(\theta_{k+1}, \theta_{k})\}
\end{equation}
where $\mathcal{G}(\theta_{k+1}, \theta_{k})= \frac{1}{\delta t}\left((\theta_{k+1} - \theta_{k}),A(\theta_{k+1}-\theta_{k})\right)$ with $A = I + \delta t\mathcal{L}$, and $\eta\in[0,1]$ is an artificial parameter with default value $\eta = 0.99$. In particular, it is shown in \cite{jiang2022improving} that we can choose
\begin{equation}
  \xi= \max\{0,\frac{-b-\sqrt{b^2-4ac}}{2a}\},
\end{equation}
with the coefficients that
\begin{equation}
    a = (\tilde{r}_{k+1} - \sqrt{f(\theta_{k+1})+C})^2
\end{equation}
\begin{equation}
    b = 2\left(\tilde{r}_{k+1} - \sqrt{f(\theta_{k+1})+C}\right)\sqrt{f(\theta_{k+1})+C}
\end{equation}
\begin{equation}
    c = f(\theta_{k+1})+C - (\tilde{r}_{k+1} )^2 - (\tilde{r}_{k+1} - r_k)^2 - \eta \mathcal{G}(\theta_{k+1},\theta_{k}).
\end{equation}

Taking the discrete inner product of the first (resp. second) equation in \eqref{scheme-RSAV} with $\theta_{k+1} - \theta_{k}$ (resp. $2\tilde{r}_{k+1}$), summing up the results,
we get
\begin{equation}
\label{eq-g-inner-prod}
	\mathcal{G}(\theta_{k+1},\theta_{k}) = \frac{1}{\delta t}\left((\theta_{k+1} - \theta_{k}),A(\theta_{k+1}-\theta_{k})\right) = -2(\tilde{r}_{k+1} - r_{k})\tilde{r}_{k+1},
\end{equation}
then the non-zero choice of $\xi$ can be rewritten as 
\begin{eqnarray*}
	\xi &=& \frac{-b-\sqrt{b^2-4ac}}{2a} = \frac{\sqrt{f(\theta_{k+1})+C}-\sqrt{ (\tilde{r}_{k+1} )^2 + (\tilde{r}_{k+1} - r_k)^2 + \eta \mathcal{G}(\theta_{k+1},\theta_{k})}}{\sqrt{f(\theta_{k+1})+C} - \tilde{r}_{k+1}}\\
	&=&\frac{\sqrt{f(\theta_{k+1})+C}-\sqrt{(1-\eta)\tilde{r}_{k+1}^2 + \eta r_{k}^2 + (1-\eta)(\tilde{r}_{k+1} -r_{k})^2}}{\sqrt{f(\theta_{k+1})+C}-\tilde{r}_{k+1}}.
\end{eqnarray*}

The implementation of \eqref{scheme-RSAV} is summarized in \textbf{Algorithm 1}.

\begin{thm}\label{thm1}
If $ \mathcal{L} $ is non-negative and  linear, then for any $\delta t> 0$, the modified energy $r^2$ in the scheme  \eqref{scheme-RSAV}  is unconditionally  diminishing in the sense that
\begin{equation*}
   r_{k+1}^2- r_{k}^2= -(1-\eta)\mathcal{G}(\theta_{k+1}, \theta_{k})\leq 0.
\end{equation*}
\end{thm}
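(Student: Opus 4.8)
The plan is to combine two identities: a discrete energy relation coming from the first two equations of \eqref{scheme-RSAV}, and the binding of the relaxation constraint that defines the admissible set $\mathcal{V}$ in \eqref{set}. The hypotheses on $\mathcal{L}$ enter in exactly two spots, so I would keep track of where each is used.

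First I would establish the discrete energy relation already recorded in \eqref{eq-g-inner-prod}. Taking the $l^2$ inner product of the first equation of \eqref{scheme-RSAV} with $\theta_{k+1}-\theta_{k}$, the linearity of $\mathcal{L}$ lets me rewrite $\mathcal{L}\theta_{k+1}-\mathcal{L}\theta_{k}=\mathcal{L}(\theta_{k+1}-\theta_{k})$, so the first two terms assemble into $\mathcal{G}(\theta_{k+1},\theta_{k})=\frac{1}{\delta t}(\theta_{k+1}-\theta_{k},A(\theta_{k+1}-\theta_{k}))$ with $A=I+\delta t\mathcal{L}$; taking the inner product of the second equation with $2\tilde{r}_{k+1}$ produces the same cross term $\big(\frac{\nabla f(\theta_{k})}{\sqrt{f(\theta_{k})+C}}\tilde{r}_{k+1},\theta_{k+1}-\theta_{k}\big)$, and eliminating it yields \eqref{eq-g-inner-prod}, namely $\mathcal{G}=-2(\tilde{r}_{k+1}-r_{k})\tilde{r}_{k+1}$. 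Using the elementary identity $2x(x-y)=x^2-y^2+(x-y)^2$ with $x=\tilde{r}_{k+1}$ and $y=r_{k}$, this is equivalent to $\tilde{r}_{k+1}^2-r_{k}^2+(\tilde{r}_{k+1}-r_{k})^2=-\mathcal{G}$.

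Second I would use the relaxation step. By construction $\xi$ is taken as the (nonnegative) root of the quadratic $a\xi^2+b\xi+c=0$ whose coefficients are exactly those appearing in the defining inequality of $\mathcal{V}$; at this root the constraint holds with equality, $r_{k+1}^2-\tilde{r}_{k+1}^2-(\tilde{r}_{k+1}-r_{k})^2=\eta\mathcal{G}$. Adding this to the rewritten energy relation cancels the $\tilde{r}_{k+1}^2$ and $(\tilde{r}_{k+1}-r_{k})^2$ contributions and gives precisely $r_{k+1}^2-r_{k}^2=\eta\mathcal{G}-\mathcal{G}=-(1-\eta)\mathcal{G}$. Finally, since $\mathcal{L}$ is non-negative, $A=I+\delta t\mathcal{L}$ is positive definite for every $\delta t>0$, so $\mathcal{G}(\theta_{k+1},\theta_{k})\geq 0$; together with $\eta\in[0,1]$ this forces $r_{k+1}^2-r_{k}^2\leq 0$, completing the argument.

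The step that needs the most care is the clipping $\xi=\max\{0,\cdot\}$. When the computed root is negative the scheme sets $\xi=0$, so $r_{k+1}=\sqrt{f(\theta_{k+1})+C}$ and the defining inequality of $\mathcal{V}$ need not be tight; in that branch the displayed equality relaxes to the one-sided bound $r_{k+1}^2-r_{k}^2\leq-(1-\eta)\mathcal{G}\leq 0$. I would therefore either restrict the equality statement to the generic case where the positive root is active, or observe that the membership $\xi\in\mathcal{V}$ already supplies $r_{k+1}^2-\tilde{r}_{k+1}^2-(\tilde{r}_{k+1}-r_{k})^2\leq\eta\mathcal{G}$, which combined with the energy relation yields the dissipation inequality that the ``diminishing'' conclusion actually requires. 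Everything else is routine algebra, with linearity of $\mathcal{L}$ used to form $A$ and its positive semidefiniteness used to sign $\mathcal{G}$.
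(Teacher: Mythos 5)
Your proof is correct and follows essentially the same route as the paper: test the first two equations of \eqref{scheme-RSAV} with $\theta_{k+1}-\theta_{k}$ and $2\tilde{r}_{k+1}$ to get \eqref{eq-g-inner-prod}, rewrite it as $\tilde{r}_{k+1}^2-r_{k}^2+(\tilde{r}_{k+1}-r_{k})^2=-\mathcal{G}(\theta_{k+1},\theta_{k})$, add $r_{k+1}^2-\tilde{r}_{k+1}^2$, invoke the constraint defining $\mathcal{V}$ in \eqref{set}, and use non-negativity of $\mathcal{L}$ to conclude $\mathcal{G}\geq 0$. Your closing caveat about the clipped branch $\xi=\max\{0,\cdot\}$ is in fact sharper than the paper's own argument, which ends ``thanks to \eqref{set}'' and thus, using only the inequality in \eqref{set}, strictly establishes $r_{k+1}^2-r_{k}^2\leq-(1-\eta)\mathcal{G}(\theta_{k+1},\theta_{k})$; the stated equality holds only when the constraint is tight (the unclipped root), exactly as you observe.
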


\begin{proof}
		By \eqref{eq-g-inner-prod}, we obtain 
		\begin{equation*}
			\tilde r^2_{k+1}-  r^2_{k}= -\frac1{\delta t}\|{\theta_{k+1} - \theta_{k}}\|^2-( \mathcal{L}(\theta_{k+1} - \theta_{k}), (\theta_{k+1} - \theta_{k}))-(\tilde r_{k+1}-r_{k})^2.
		\end{equation*}
Adding $r_{k+1}^2 - \tilde{r}^2_{k+1}$ on both sides, noticing 
	$$\mathcal{G}(\theta_{k+1}, \theta_{k}) = \frac{1}{\delta t}\|\theta_{k+1} - \theta_{k}\|^2 + (\mathcal{L}(\theta_{k+1}-\theta_{k}),(\theta_{k+1}-\theta_{k})),$$ 
we obtain
	\begin{align*}
  r_{k+1}^2- r_{k}^2= - \mathcal{G}(\theta_{k+1}, \theta_{k}) - (\tilde{r}_{k+1} - r_{k})^2 + r_{k+1}^2 - \tilde{r}^2_{k+1},
	\end{align*}
which implies the desired result thanks to (\ref{set}).
\end{proof}

\begin{algorithm}
\caption{The basic RSAV scheme}\label{rsav general implementation}
\begin{algorithmic}[1]
\Inputs{
        $\delta t$: step-size, \\
        $C$: constant to guarantee  the positivity of $f(x)+C$,\\
        $A = I + \delta t  \mathcal{L}$ : the linear operator,\\
        $\theta_0$: initial parameter vector
        }
 \State{
        $r_0\leftarrow \sqrt{f(\theta_0)+C}$
        }
\For{$k=0,1,2,..., M-1$}
\vspace{2pt}
\State $g_{k} = \frac{\nabla f(\theta_{k})}{\sqrt{f(\theta_{k})+C}}$
\State $\hat{g}_{k} = A^{-1}g_{k}$
\State $\tilde{r}_{k+1} = \frac{r_{k}}{1+ \frac{\delta t}{2}(g_k, \hat{g}_k)}$
\State $\theta_{k+1} = \theta_{k} - \delta t \tilde{r}_{k+1} \hat{g}_{k}$
\State {$\xi = \frac{\sqrt{f(\theta_{k+1})+C}-\sqrt{(1-\eta)\tilde{r}_{k+1}^2 + \eta r_{k}^2 + (1-\eta)(\tilde{r}_{k+1} -r_{k})^2}}{\sqrt{f(\theta_{k+1})+C}-\tilde{r}_{k+1}}$}
\State $\xi = \max\{0,\xi\}$
\State $r_{k+1} = \xi\tilde{r}_{k+1} + (1-\xi)\sqrt{f(\theta_{k+1})+C}$
\vspace{2pt}
\EndFor
\Return $\theta_{M}$
\end{algorithmic}
\end{algorithm}


\subsection{Selection of the operator \texorpdfstring{$\mathcal{L}$}{}}
In the SAV approach for gradient flows \cite{shen2019new}, it is found that a proper  choice of the splitting \eqref{splitting}, i.e., the choice of the quadratic term $\frac 12(\mathcal{L}\theta,\theta)$, can significantly increase the robustness and efficiency of the SAV schemes. For gradient flows coming from materials science or fluid dynamics, there are usually obvious candidates in the free energy. However,  for minimization problems, particularly those from machine learning problems, there are usually no obvious quadratic terms in the energy functions.  In these cases, we can artificially choose some simple operators.
 In this paper, we consider two simple operators below, for which the inverse operator $(I+\delta t \mathcal L)^{-1}$ can be efficiently implemented.
\subsubsection{Diagonal Matrix}
In many optimization problems,   an $l^2$ regularization term is often added into the loss function to avoid overfitting to the data in training sets, namely, 
\begin{equation}
	J(x) = f(x) + \frac{\lambda}{2} \|x\|^2.
\end{equation}
In this case, a natural choice is to set  $\mathcal{L} = \lambda I$. More generally, we can use  $\mathcal{L} =D$ with $D$ being a diagonal matrix with positive entries, e.g., $D$ can be the diagonal entries of the Hessian of the cost function. 

\subsubsection{Discrete Laplacian Matrix}
In some machine learning problems, the discrete  Laplacian matrix  is used as a smoothing operator which can reduce the variance during the mini-batch training process  \cite{LSGD}. This corresponds to  $\mathcal{L} = -\sigma \Delta$ where  $\sigma$ is a positive parameter and $\Delta$ is a discrete Laplacian matrix, {and $(I+\delta t \mathcal L)^{-1}$ can be efficiently inverted by FFT based methods.}
The acceleration by using discrete Laplacian in classical primal dual algorithms has been also justified in \cite{jacobs2019solving}.

\subsection{An adaptive algorithm based on the RSAV scheme \texorpdfstring{\eqref{scheme-RSAV}}{}}
Similar to the  modified  SAV scheme \eqref{scheme-general}, the RSAV scheme \eqref{scheme-RSAV} is also unconditionally energy diminishing. A main advantage of unconditionally stable schemes is that one can adaptively adjust the time step size to achieve faster convergence. 
In particular, we can  use 
\begin{equation}
	I_k(r,\theta) = \frac{r_k}{\sqrt{f(\theta_k)+C}}
\end{equation}
 as an indicator   to control the deviation between modified energy and true energy. 
 {For solving differential equations $\theta_t=-\nabla f(\theta
 )$, $	I_k(r,\theta)$ should be as close as to $1$ for the sake of the time accuracy. But for a minimization problem, there is no time accuracy issue thus we can allow $I_k(r,\theta)$ to deviate from $1$ to achieve faster convergence. However, $I_k(r,\theta)$ needs to be away from zero to avoid slow convergence,}
as the SAV and RSAV algorithms may {converge much slower than the vanilla gradient descent} when the ratio $I_k(r,\theta)$ becomes too small.

We observe from (\ref{scheme-RSAV}) that the true  step-size for the gradient $\nabla f(\theta_k)$ is $\frac{\tilde r_{k+1}}{\sqrt{f(\theta_k)+C}}\delta t$.  Thus if the ratio is small i.e. $I_k(r,\theta) < \gamma$,  {the true  step-size for the gradient would be too small resulting in slow convergence.} To this end, we present a simple adaptive rule with an adaptive constant $\rho>1$ with default value $\rho=1.1$ described in {\bf Algorithm 2}.

\begin{algorithm}[!ht]
	\caption{The adaptive RSAV scheme}\label{adaptive rsav general implementation}
	\begin{algorithmic}[1]
		\Inputs{
			$\delta t_0$: initial step-size, $\delta t_{min}$: the lower bound of step-size,\\
			$C$: constant to guarantee the positivity of $f(x)+C$,\\
			$A = I + \delta t \mathcal{L}$ : the linear operator,\\
			$\theta_0$: initial parameter vector,\\
			$\rho$: adaptive constant which is greater than $1$,\\
			$\gamma$: threshold for the indicator $I(r,\theta)$.
		}
         \State{
        $r_0\leftarrow \sqrt{f(\theta_0)+C}$: Initialize the SAV,
        }
		\For{$k=0,1,2,..., M-1$}
		\vspace{2pt}
		\If{$\frac{r_{k}}{\sqrt{f(\theta_k)+C}} < \gamma$ \text{ and } $\delta t > \delta t_{min}$}  
		\State $\delta t_{k+1} = \max\{\frac{r_k}{\sqrt{f(\theta_k)+C}} \delta t_{k}, \delta t_{min}\}$   
		\Else
		\State $\delta t_{k+1} = \rho \delta t_{k}$
		\EndIf
  \\
		\State $g_{k} = \frac{\nabla f(\theta_k)}{\sqrt{f(\theta_{k})+C}}$
        \State $\hat{g}_{k} = A^{-1}g_{k}$
		\State $\tilde{r}_{k+1} = \frac{r_{k}}{1+ \frac{\delta t_{k+1}}{2}(g_{k}, \hat{g}_{k})}$
		\State $\theta_{k+1} = \theta_{k} - \delta t_{k+1} \tilde{r}_{k+1} \hat{g}_{k}$
		\State {$\xi = \frac{\sqrt{f(\theta_{k+1})+C}-\sqrt{(1-\eta)\tilde{r}_{k+1}^2 + \eta r_{k}^2 + (1-\eta)(\tilde{r}_{k+1} -r_{k})^2}}{\sqrt{f(\theta_{k+1})+C}-\tilde{r}_{k+1}}$}
		\State $\xi = \max\{0,\xi\}$
        \State $r_{k+1} = \xi\tilde{r}_{k+1} + (1-\xi)\sqrt{f(\theta_{k+1})+C}$
		\vspace{2pt}
		\EndFor
		\Return $\theta_{M}$
	\end{algorithmic}
\end{algorithm}
\begin{remark}
Note that in many applications of neural networks and machine learning, the cost of computing the full batch is generally  too high. In these cases, we can adopt the mini-batch approach commonly used in stochastic gradient decent, and restart the RSAV scheme at the beginning of each mini-batch. 
\end{remark}

\begin{remark}
As a further generalization, we may   replace the   operator $\mathcal{L}$ in  \eqref{scheme-general} and \eqref{scheme-RSAV} by a linear nonnegative operator $\mathcal{L}_k$ which may depend on $\theta_k$ at each step. Then, Theorems \ref{thm0} and   \ref{thm1} still hold with $\mathcal{L}$ replaced by  $\mathcal{L}_k$. 

\end{remark}

\section{Numerical Results}
\label{sec: num}
We present in this section several illustrative numerical experiments by using  our RSAV approach, and compare it with popular gradient based approaches.

In order to present a fair comparison to gradient descent (GD), we consider {a composite gradient method}.  {By abuse of notation,
we shall refer to {\it GD with $\mathcal L$} as the following method for the splitting \eqref{splitting}:}
\[\theta_{k+1}+\delta t \mathcal{L}\theta_{k+1}=\theta_{k}+\delta t ( \mathcal{L}\theta_{k}-\nabla f(\theta_k)),\]
which is equivalent to
\begin{equation}
	\label{Vanilla-GD2}
	\theta_{k+1}=\theta_{k}-\delta t(I+\delta t \mathcal L)^{-1}\nabla f(\theta_k).
\end{equation}
{
The scheme \eqref{Vanilla-GD2}
can also be regarded as the forward-backward splitting scheme $$\theta_{k+1}=\theta_k-\delta t \nabla F(\theta_k)-\delta t \nabla G(\theta_{k+1})$$ for minimizing a composite function $F(\theta)+G(\theta)$ with $F(\theta)=f(\theta)-\frac12 \theta^T \mathcal L\theta$ and $G(\theta)=\frac12 \theta^T \mathcal L\theta$.}

{Note that the scheme \eqref{Vanilla-GD2} reduces to the vanilla gradient descent \eqref{Vanilla-GD} if setting $\mathcal L=0$, and \eqref{Vanilla-GD2} reduces to the vanilla gradient descent \eqref{Vanilla-GD} with step size $\frac{\delta t}{1+\delta t}$ if setting $\mathcal L=I$. Therefore, we do not consider {GD with $\mathcal L=I$}, and we compare the adaptive RSAV in \textbf{Algorithm 2} to the following algorithms:
\begin{enumerate}
    \item {\it GD with $\mathcal L=0$}, which is the vanilla gradient descent \eqref{Vanilla-GD}.
    \item {\it GD with $\mathcal L=-\sigma \Delta$ with discrete Laplacian $\Delta$}, which is similar to the  Laplacian Smoothing Gradient Descent \cite{LSGD}.
    \item ADAM  \cite{kingma2014adam} 
    \item Nesterov accelerated gradient decent (NAG) \cite{nesterov1983method}.     
\end{enumerate}
Unless specified otherwise, we use ADAM and NAG with the default parameter settings as in \cite{ruder2016overview}:  NAG with $\gamma = 0.9$, and  ADAM with $\beta_1 = 0.9, \beta_2 = 0.999, \varepsilon = 10^{-8}$.}
\subsection{A quadratic cost function}
We  start with  a quadratic loss function from \cite{LSGD}:
\begin{equation}
    f(\theta_1, \theta_2, \ldots, \theta_{100}) = \sum_{i=1}^{50}\theta^2_{2i-1} + \sum_{i=1}^{50}\frac{1}{100}\theta_{2i}^2.
\end{equation}

For this simple function, we take either $\mathcal L=0$ or $\mathcal L=D$ where the diagonal matrix $D$ is chosen to be the Hessian $\nabla^2f(\theta)$.

To demonstrate the unconditional stability of SAV-based approaches, in Table \ref{table-quadratic} and Figure \ref{quad(no noise)} we show the results of different initial step sizes $\delta t$ for the vanilla gradient descent, i.e., GD with $\mathcal L=0$, as well as GD with $\mathcal L=\mathcal D$. 
We observe that the vanilla gradient descent blows up for the constant step size $\delta t = 1$, while the adaptive RSAV works quite well. 

\begin{table}[!ht]
    \begin{tabular}{|c|c|c|c|}
        \hline
           (initial) step-size $\delta t$ &    $0.01$ & $0.1$ & $1$\\ 
        \hline
           GD ($\mathcal{L} = 0$)&   0.3351 & 0.009121  &  50 \\
        \hline
           adaptive RSAV ($\mathcal{L} = 0$)&   6.34e-12   & 5.749e-12  & 2.264e-18  \\
        \hline
           GD ($\mathcal{L} = D$) &    0.3352 & 0.009194 &   3.152e-18  \\
        \hline
        	adaptive RSAV ($\mathcal{L} = D$)&   0   & 0  & 0  \\
        \hline
    \end{tabular}
    \caption{Loss of quadratic function after 1000 iterations.}
    \label{table-quadratic}
\end{table}

\begin{figure}[H]
	\centering
	\begin{subfigure}[b]{0.45\textwidth}            
		\includegraphics[width=\textwidth]{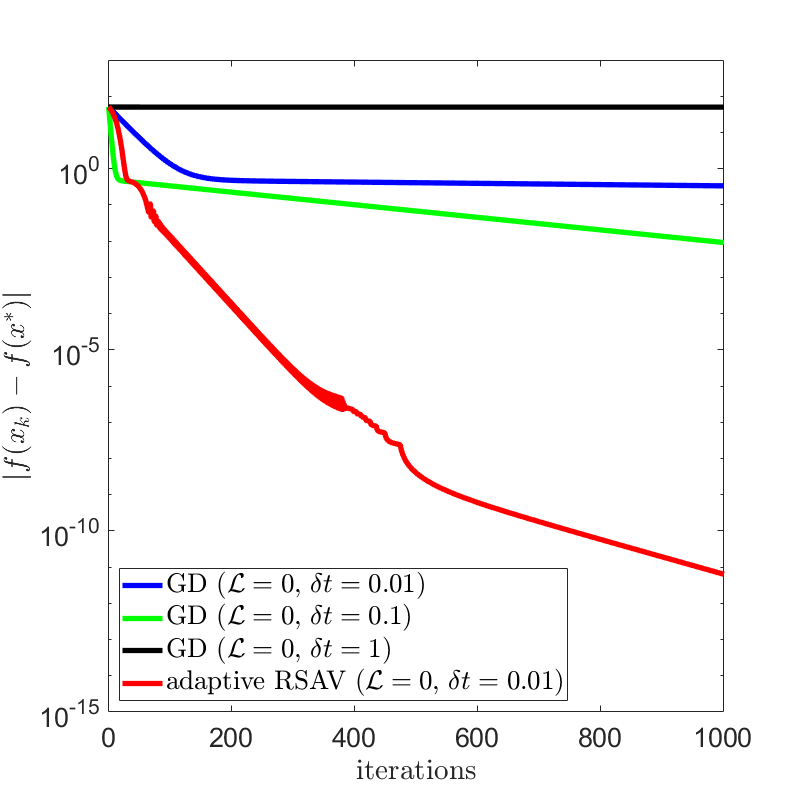}
	\end{subfigure}
	\begin{subfigure}[b]{0.45\textwidth}
		\centering
		\includegraphics[width=\textwidth]{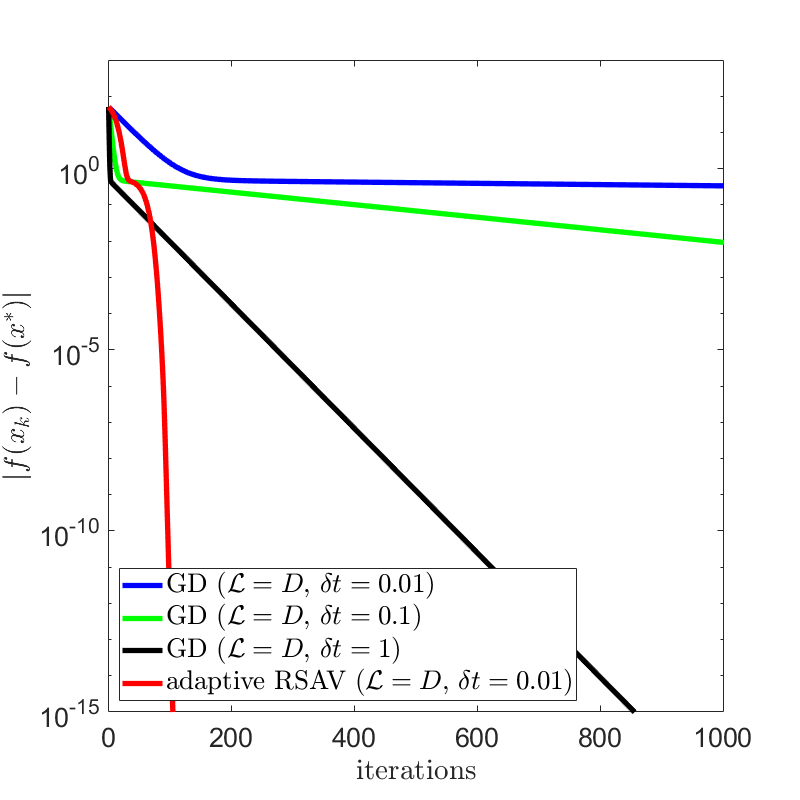}
	\end{subfigure}
	\caption{Loss curves for GD and adaptive RSAV with different splits and (initial) step-sizes $\delta t$.}
	\label{quad(no noise)}
\end{figure}

Next 
we consider the gradient perturbed by a Gaussian noise
\begin{equation}
	\nabla_{\epsilon} f(x) := \nabla f(x) + \epsilon \mathcal{N}(0,I),
\end{equation}
where  $\epsilon$ controls the noise level, $\mathcal{N}(0,I)$ is the Gaussian noise vector with zero mean and unit variance in each coordinate. 
The comparison is given in Table \ref{table-quad-noise} and Figure \ref{quad(noise)} where $\mathcal L=0$ is used for both GD and adaptive RSAV. 
We observe that the adaptive RSAV converges much faster than GD. The fast convergence of adaptive RSAV is partly due to the indicator $I_{k}(r,\theta)$ which can give a proper step size. Especially in the noisy case, the true step size is given {at a proper level} to reach a better convergence than GD and reduce the oscillation in the loss curves.

\begin{table}[!ht]
    \begin{tabular}{|c|c|c|c|}
        \hline
          (initial) step-size $\delta t$ &   $0.01$ & $0.1$ & $1$ \\ 
        \hline
           GD  ($\epsilon = 0.01$)&    0.335 & 0.009223 &   58.58 \\
        \hline
           adaptive RSAV ($\epsilon = 0.01$)&   0.0002283  & 0.0002298 & 0.0002251 \\
        \hline
           GD  ($\epsilon = 0.05$)&    0.3348 & 0.01584 &   diverge \\
        \hline
           adaptive RSAV  ($\epsilon = 0.05$)& 0.004934  & 0.005023  & 0.004889\\
        \hline
           GD  ($\epsilon = 0.1$)&  0.3354 & 0.03808 &   diverge \\
        \hline
           adaptive RSAV  ($\epsilon = 0.1$)& 0.01746   & 0.01924  & 0.0188 \\
        \hline
    \end{tabular}
    \caption{Loss of quadratic function after 1000 iterations with different noise levels $\epsilon$ and (initial) step-sizes $\delta t$. $\mathcal L=0$ is used for both  GD and adaptive RSAV.}
    \label{table-quad-noise}
\end{table}

\begin{figure}[H]
	\centering
	\begin{subfigure}[b]{0.32\textwidth}            
		\includegraphics[width=\textwidth]{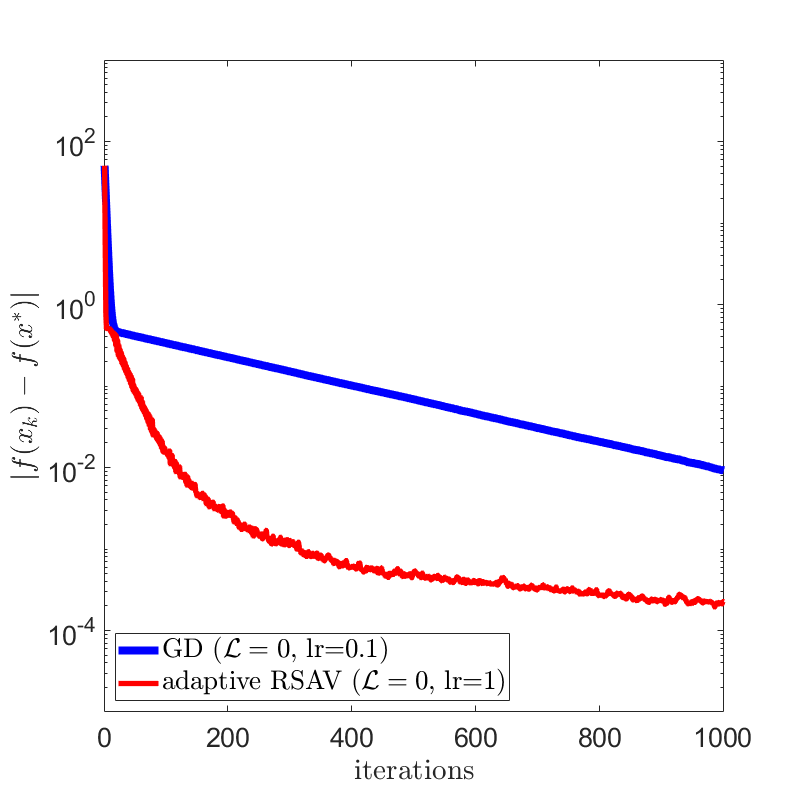}
		\caption{$\epsilon = 0.01$}
	\end{subfigure}
	\begin{subfigure}[b]{0.32\textwidth}
		\centering
		\includegraphics[width=\textwidth]{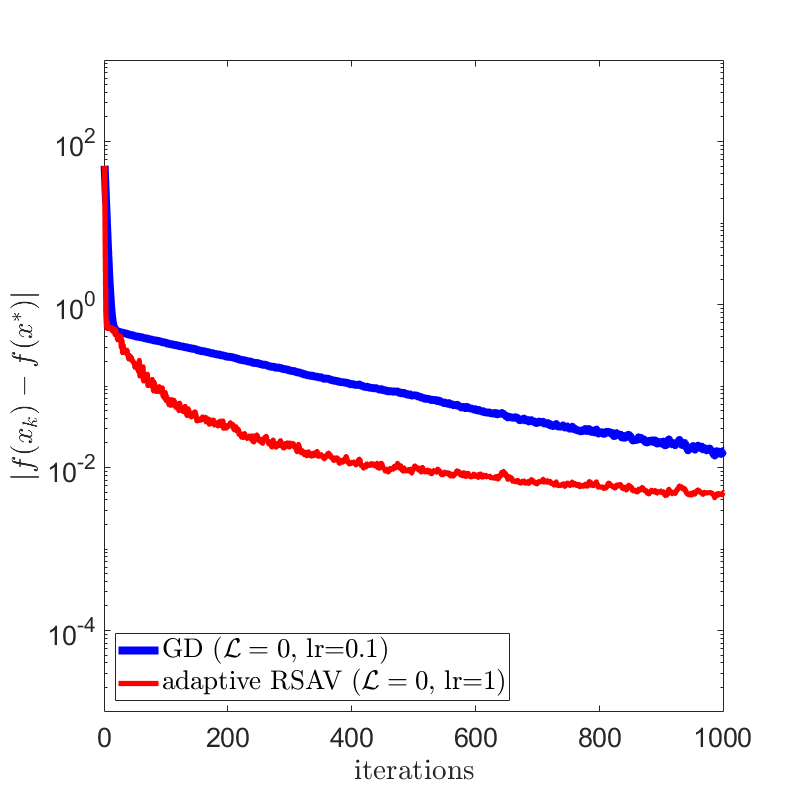}
		\caption{$\epsilon = 0.05$}
	\end{subfigure}
	\begin{subfigure}[b]{0.32\textwidth}            
		\includegraphics[width=\textwidth]{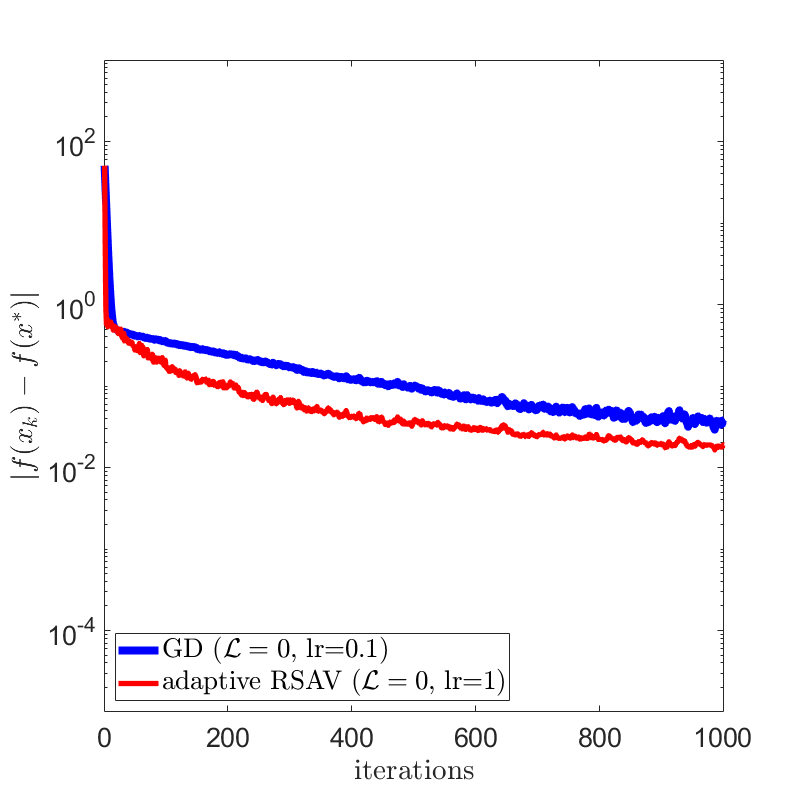}
		\caption{$\epsilon = 0.1$}
	\end{subfigure}
	\caption{Loss curves for GD and adaptive RSAV with different noise levels. The learning rate (lr) refers to the step size $\delta t$ for GD and the initial
 step size $\delta t$ in the adaptive RSAV.}
	\label{quad(noise)}
\end{figure}

\subsection{Rastrigin function}
Consider
\begin{equation}
    f(x) = f(\theta_1, \theta_2, \ldots, \theta_n) = \sum_{i=1}^{n}\theta_i^2 + 10n -10 \sum_{i=1}^{n}\cos(2\pi \theta_i),
\end{equation}
which has many local minima. The function can be defined on any input domain but it is usually evaluated on $x\in [-5.12,5.12]$ for $i=1,2,\ldots,n$. The function has a global minimum at $f(x^{*}) = 0$ located at $x^{*} = (0,0,\ldots,0)$. In this example, we compare the adaptive RSAV with  popular optimization methods ADAM and NAG with their default parameter settings as in \cite{ruder2016overview}:  NAG with $\gamma = 0.9$, and  ADAM with $\beta_1 = 0.9, \beta_2 = 0.999, \varepsilon = 10^{-8}$. We shall keep using these default settings in all following  experiments.

\begin{table}[!ht]
    \begin{tabular}{|c|c|c|c|c|}
        \hline
           initial stepsize $\delta t$ &    $0.001$ & $0.01$ & $0.1$ & $1$\\ 
        \hline
           GD ($\mathcal{L} = 0$)&   12.93 & 37.86  &  109.2  & diverge \\
        \hline
           adaptive RSAV ($\mathcal{L} = 0$)& 13.05  & 13.03  & 12.95   & 2.608e-9\\
        \hline
           NAG &  12.93  &  152 & 505.1  & diverge\\
        \hline
           ADAM &  32.28  &  12.94 & 12.93 & 8.958\\
        \hline
    \end{tabular}
    \caption{Loss of Rastrigin function after 100 iterations in 2D.}
\end{table}
\begin{figure}[!ht]
    \centering           
    $$\includegraphics[width=0.49\textwidth]{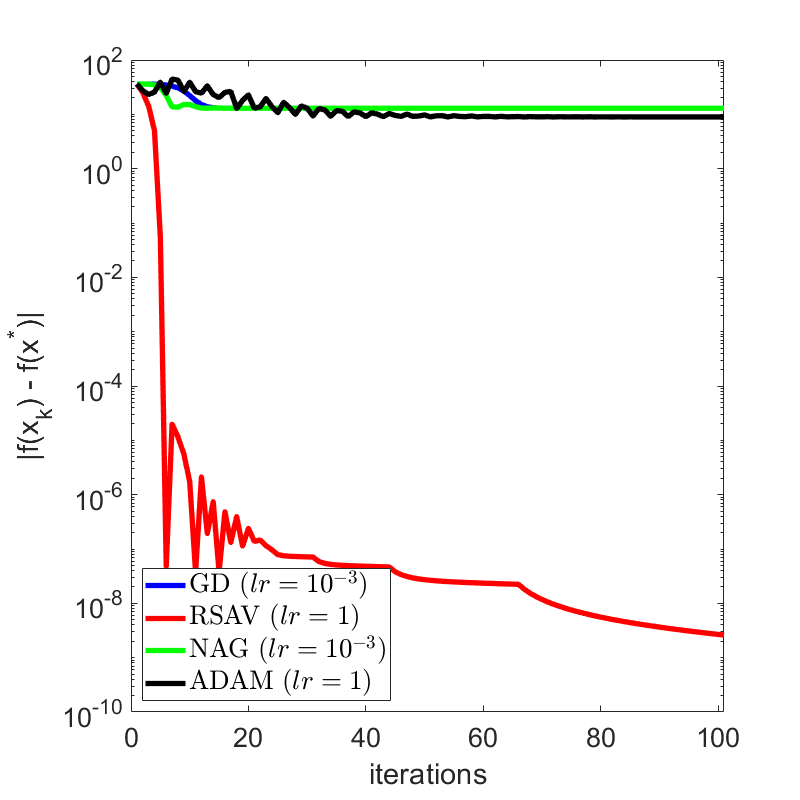}  \includegraphics[width=0.49\textwidth]{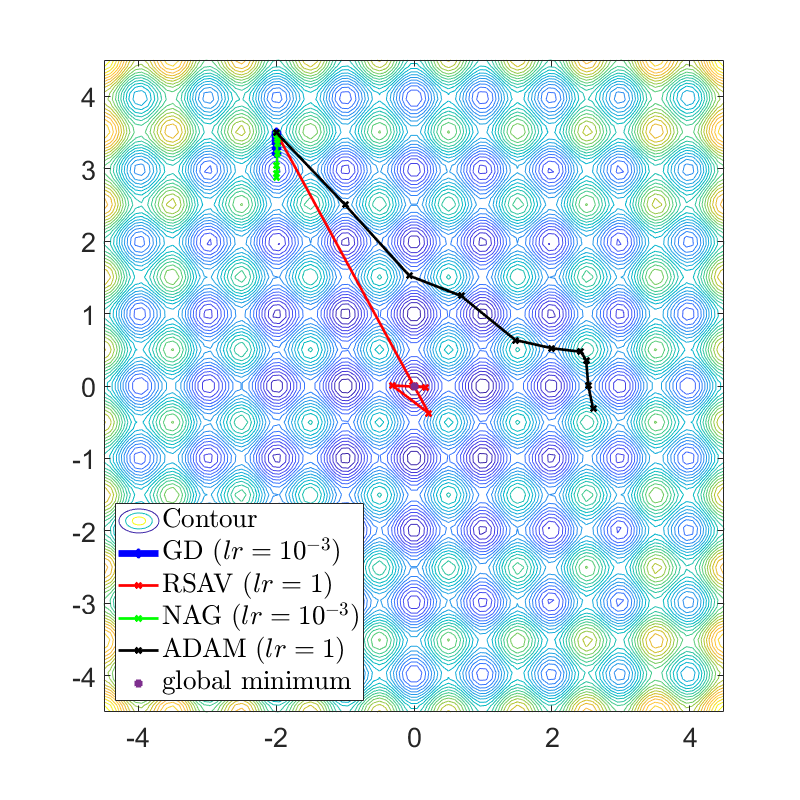}$$
    \caption{Rastrigin function: Left: Convergence curves with 100 iterations; Right: Paths with first 10 iterations. The learning rate (lr) refers to the initial
 step size $\delta t$ in the adaptive RSAV.}\label{Rastrigin}
\end{figure}
We plot in the left of Fig. \ref{Rastrigin} the convergence curves of different methods, and observe that the RSAV converges much faster than ADAM with the same initial step-size. We also plot in the right of Fig. \ref{Rastrigin} the paths towards the minimum by different methods. We observe that RSAV enjoys a fast convergence {if using a large initial step size $\delta t$}.

\subsection{Rosenbrock function}
This is a benchmark problem for optimization of \textbf{non-convex} functions.
We first consider the 2D case with
\begin{equation}
    f(x,y) = (a-x)^2 + b (y-x^2)^2,
\end{equation}
it has a global minimum at $(x,y) = (a,a^2)$, which  is inside a long narrow, parabolic shaped flag valley. To find the valley is trivial, but to converge to the global minimum is usually difficult.

We set $a=1$ and $b=100$ in the following experiments and other parameters the same as in  \cite{LSGD}, and
start with the initial point with coordinate $(-3,-4)$. In Table \ref{table-Rosenbrock}, we observe that a large step size can lead to blow-up for other methods except for RSAV. Thus in Figure \ref{Rosenbrock},  we only show the results with the largest suitable step sizes for other algorithms. For adaptive RSAV, we just use the same initial step size as ADAM. This example reveals the benefits of modified energy decreasing property of the RSAV. Although ADAM can get close to the global minimum at first, it still goes to the wrong direction caused by the momentum and eventually goes back after wasting many iterations. Only RSAV converges to the global minimum directly with the guide of decreasing (modified) energy.

\begin{table}[!ht]
    \begin{tabular}{|c|c|c|c|}
        \hline
           step-size $\delta t$ &    $10^{-4}$ & $10^{-2}$ & $1$\\ 
        \hline
           GD ($\mathcal{L} = 0$)&   0.7142 &   diverge & diverge \\
        \hline
           adaptive RSAV ($\mathcal{L} = 0$)& 0.01086 & 0.01122 & 0.0107\\
        \hline
           NAG &  5.326 & diverge  & diverge\\
        \hline
           ADAM & 15198 & 12.5 & 1.2\\
        \hline
    \end{tabular}
    \caption{Loss of Rosenbrock function after 1000 iterations in 2D}
    \label{table-Rosenbrock}
\end{table}

\begin{figure}[H]
    \centering         
    \includegraphics[width=0.49\textwidth]{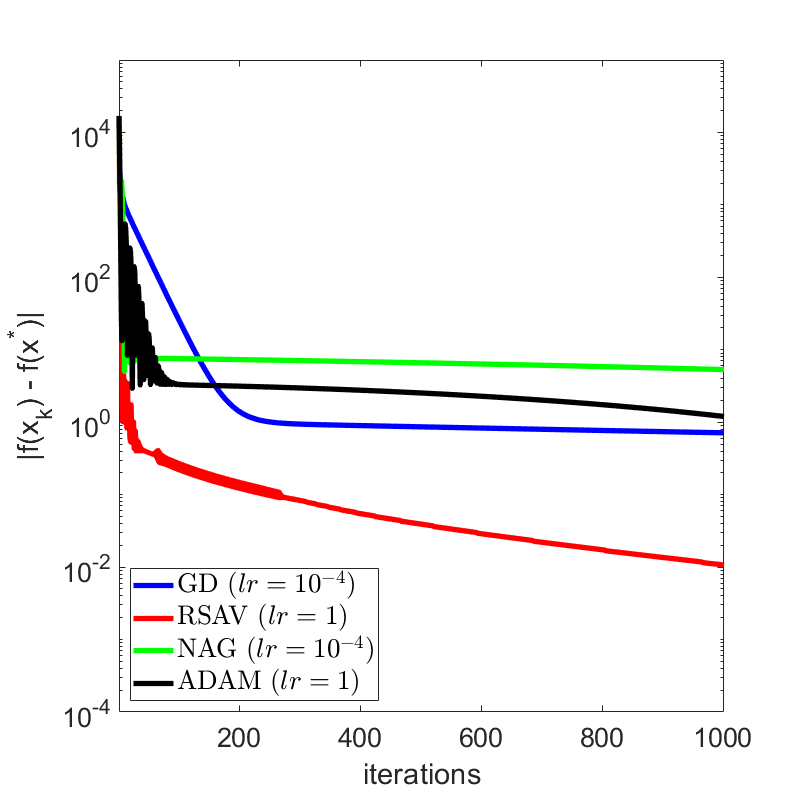}  
    \includegraphics[width=0.49\textwidth]{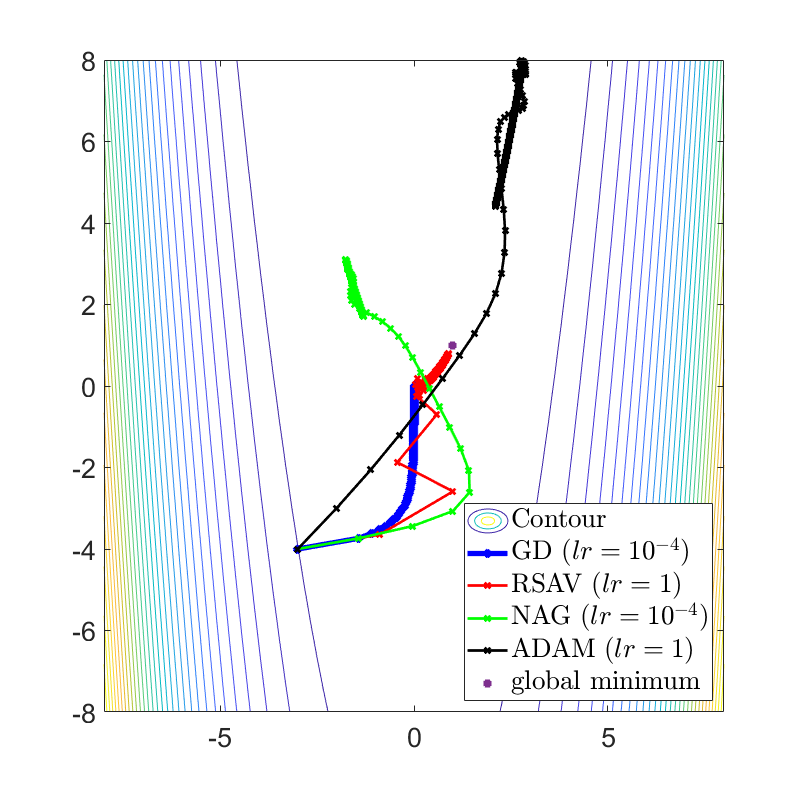}
    \caption{2D Rosenbrock problem: Left: Convergence curves; Right: Paths with 1000 iterations in the  $(\theta_1, \theta_2)$ domain.}\label{Rosenbrock}
\end{figure}

Next, we consider the high dimensional cases with
\begin{equation}
	f(x) = \sum_{i=1}^{n}(a - \theta_i)^2 + b\sum_{i=1}^{n-1}(\theta_{i+1} - \theta_i^2)^2,
\end{equation}
with the global minimum $f(x^{*}) = 0$ at $x^{*}=(a,a^2,a,a^2,\ldots,a,a^2)$. We take $a = 1$ and $b = 100$, and the initial point  $(0,\ldots,0)$.
The results with the dimension equal to 10, 100 and 1000 are shown in Fig. \ref{Rosenbrock(high)}. We observe  similar convergence behavior for all cases as in the two dimensional case.

\begin{figure}[H]
\centering
    \begin{subfigure}[b]{0.32\textwidth}            
            \includegraphics[width=\textwidth]{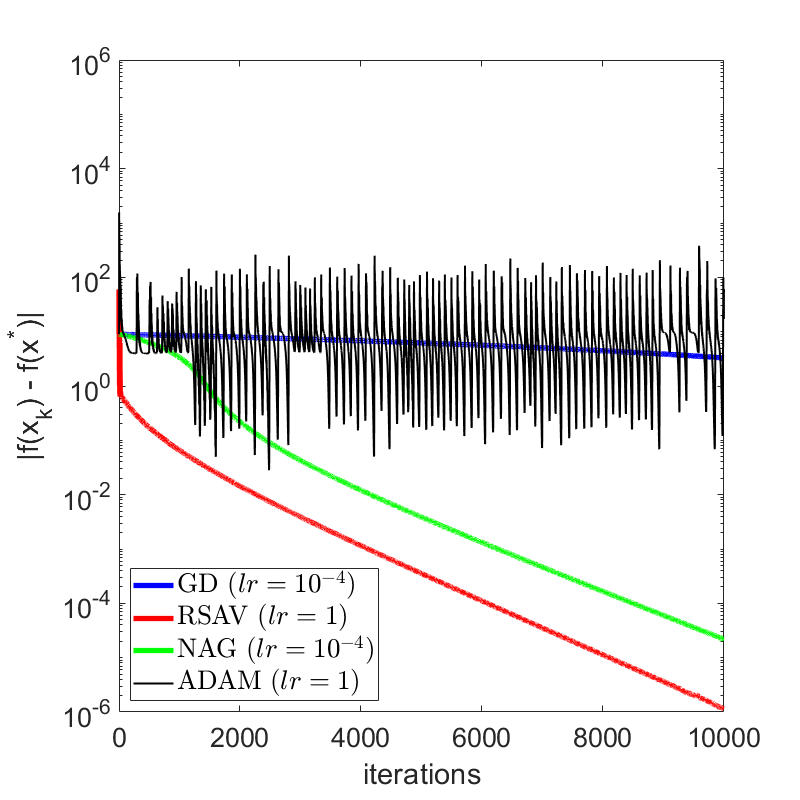}
            \caption{$n = 10$}
    \end{subfigure}
    \begin{subfigure}[b]{0.32\textwidth}
            \centering
            \includegraphics[width=\textwidth]{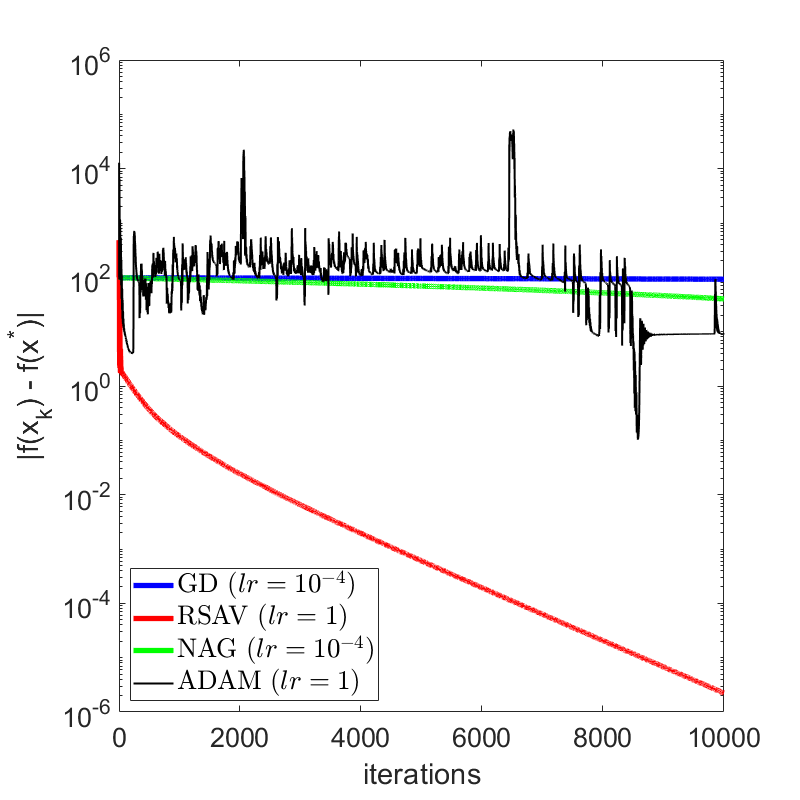}
            \caption{$n= 100$}
    \end{subfigure}
        \begin{subfigure}[b]{0.32\textwidth}            
            \includegraphics[width=\textwidth]{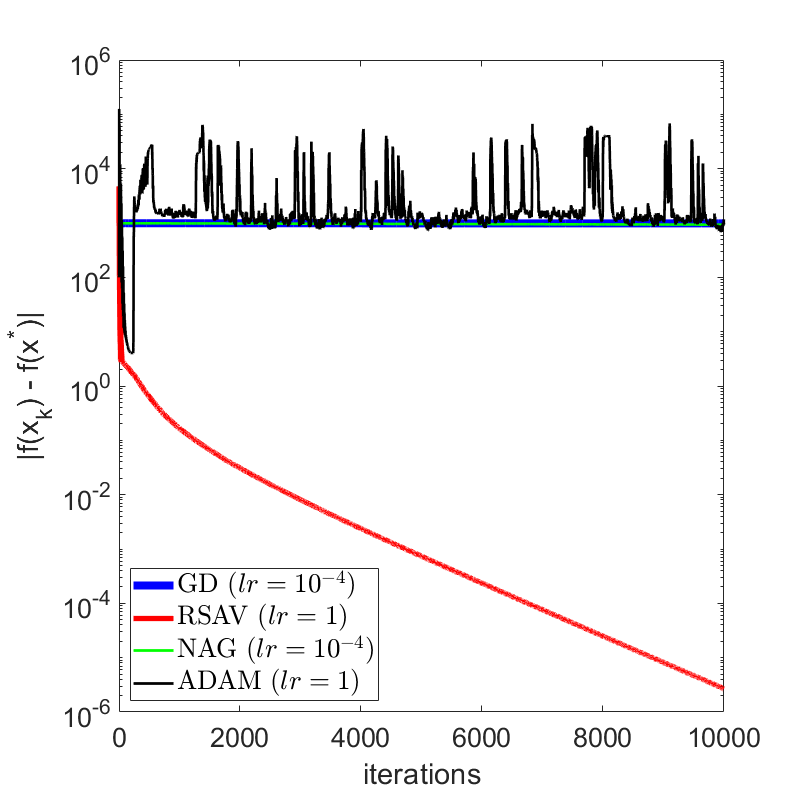}
            \caption{$n = 1000$}
    \end{subfigure}
    \caption{Loss of Rosenbrock function  with dimension $n$.}
    \label{Rosenbrock(high)}
\end{figure}

\begin{remark}
	 If we compare the adaptive RSAV with Nesterov accelerated gradient decent, the results are still quite good especially when the dimension is $1000$. Thanks to the adaptive scheme, the performance of RSAV in this problem independent of the dimension.
\end{remark}
\subsection{Phase Retrieval}

The phase retrieval problem \cite{candes2015phase2} can be formulated as 
$$\min_{z\in \mathbbm C^N} f(z):=\frac12\|\mathcal A(z^* z)-b \|^2$$ where $z^*\in \mathbbm C^{1\times N}$ is the conjugate transpose of $z$,  $b\in\mathbbm R^M$ and $\mathcal A: \mathbbm C^{N\times N}\longrightarrow \mathbbm R^M$ is a linear operator.  
For the real-valued function $f(z)$ with complex variable $z:=a+\mathbbm i b$, where $\mathbbm i$ is the imaginary unit and $a,b\in \mathbbm R$ are real and imaginary parts of $z$, we can define the Fr\'echet derivative induced by the natural choice of real inner product for $\mathbbm C^N$  as the following \cite{HermitianPSD}:
\[\nabla f(z):=\frac{\partial f(z)}{a}+\mathbbm i \frac{\partial f(z)}{b}=2\mathcal A^*(\mathcal A(z^*z-b))z,\]
where $\mathcal A^*$ is the adjoint operator of $\mathcal A$.
Then the vanilla gradient descent 
algorithm for $\min_{z\in \mathbbm C^N} f(z)$  can be defined  as in \eqref{Vanilla-GD} using $\nabla f(z)$ above. 
The gradient descent method with a suitable step sizing rule is also  also referred to as the Wirtinger flow \cite{candes2015phase}.

  In particular, $f(z)$ is a non-convex quartic polynomial function of $z$.  
  For the theorectical convergence of minimizing such a non-convex function,
with a spectral initialization, i.e., $z_0$ being the leading eigenvector of $\mathcal A^*(b)$, the convergence of Wirtinger flow with high probability can be proven  for a very special class of phase retrieval problems \cite{candes2015phase, cai2016optimal}. For solving phase retrieval with random initial guess, the convergence for minimizing a smoothed amplitude flow based model was proven in \cite{cai2022solving}. 
In terms of practical performance with only random initialization, state-of-the-art algorithms such as the Riemannian LBFGS method could be much more efficient than gradient descent algorithms \cite{candes2015phase}.

We emphasize that we only use such phase retrieval problems as a testing example to validate the performance of the RSAV method. So we test the algorithms with a random initialization. 

  We compare vanilla gradient descent (GD), adaptive RSAV with $\mathcal L=0$, and steepest descent (SD) \cite{curry1944method,bryson1962steepest}.
    The steepest descent method is to use the optimial step size in \eqref{Vanilla-GD}, and it is possible to compute such an optimal step size for a polynomial cost function. 
We test the performance of RSAV algorithm on the following phase retrieval problem.
Let $\mathcal M_i\in \mathbbm C^{N}$ be i.i.d Gaussian and $\circ$ denote the entrywise product. 
Let $\mathcal F$ denote the Fourier transform.
The linear operator $\mathcal A$ is defined by assigning
$\|\mathcal F(\mathcal M_i\circ z)\|^2$ to $b$, e.g., $\frac{M}{N}=m.$ 
We consider the test case for the true solution $z_*$ being an image of size $n\times n$ with $n=256$. So the size of unknown is $N=n^2=256^2$.
We consider two test cases: 
\begin{enumerate}
\item The true minimizer $z_*$ is a real image of camera man with size $256\times 256$ as shown in Figure \ref{real-image},  $m=6$ Gaussian random masks and a random initial guess.
\item The true minimizer $z_*$ is a complex image of golden ball  with size $256\times 256$, see \cite{huang2017solving} for details,  $m=10$ Gaussian random masks  and a random initial condition.
\end{enumerate}

{See Figure \ref{phaseretrieval} for the comparision of the performance of gradient based algorithms. 
For the vanilla gradient descent (GD), we use the nearly largest stable constant step size $\delta t=0.5$. 
In Figure \ref{phaseretrieval_step}, we list a comparsion of the step size $\delta_k$ in the adaptive RSAV
method with the optimital step size in the steepest descent method.  
We can see the performance of adaptive RSAV has the closest performance to the steepest descent. For the real image case, SD converged after 10000 iterations and RSAV converged after 20000 iterations. However,  to compute the optimal step size in the steepest descent, at least two more evaluations of $\mathcal A$ are needed, thus quite expensive. More importantly, for a general cost function, it is difficult to find the optimal step size. See Section \ref{sec-quadratic} for an analysis of the step size in the explicit SAV gradient descent with restarting $r_k$ every iteration for quadratic functions.  }

\begin{figure}[H]
\centering
    \begin{subfigure}[b]{0.325\textwidth}            
            \includegraphics[width=\textwidth]{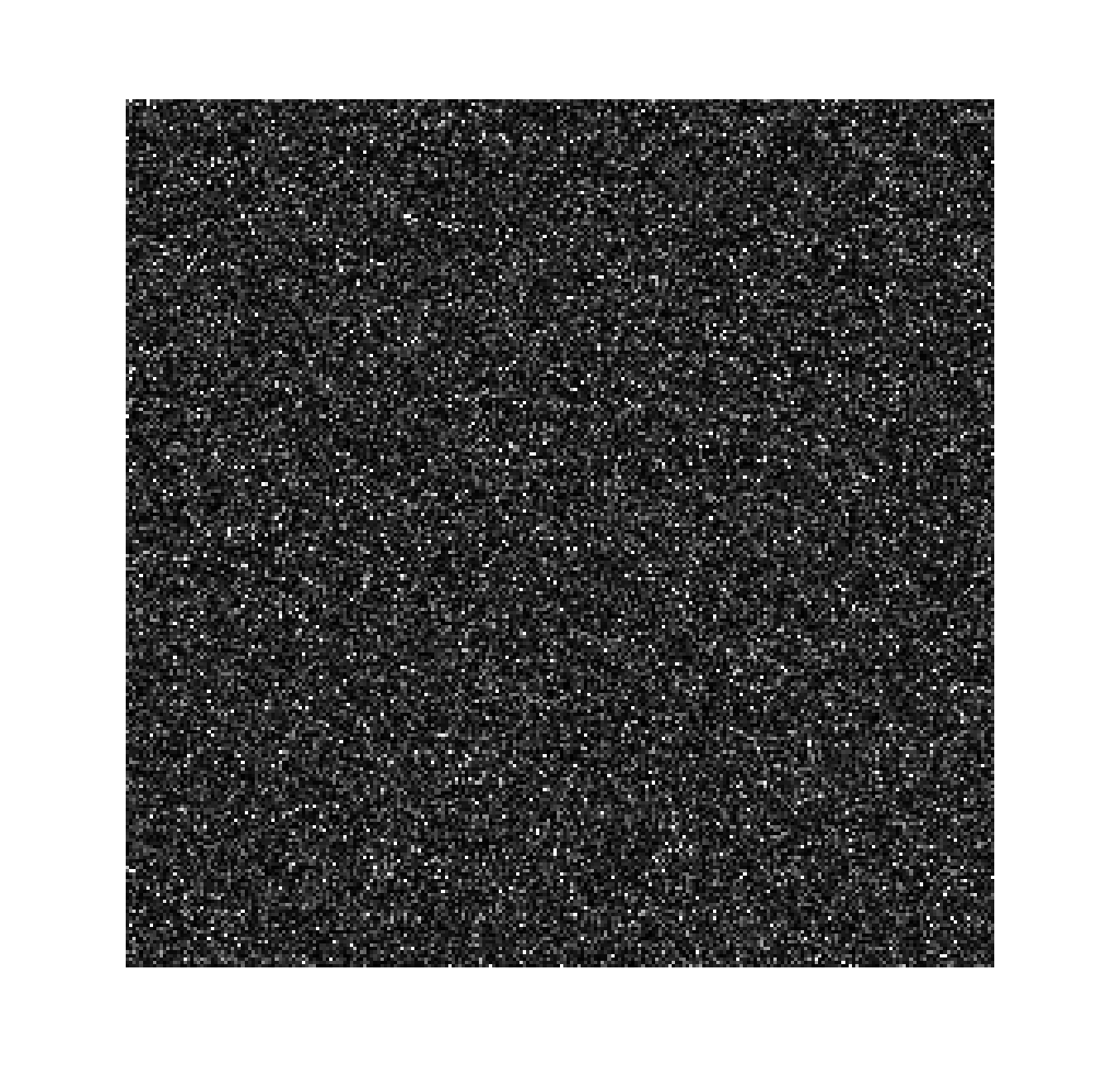}
            \caption{GD}
    \end{subfigure}
    \begin{subfigure}[b]{0.325\textwidth}
            \centering
            \includegraphics[width=\textwidth]{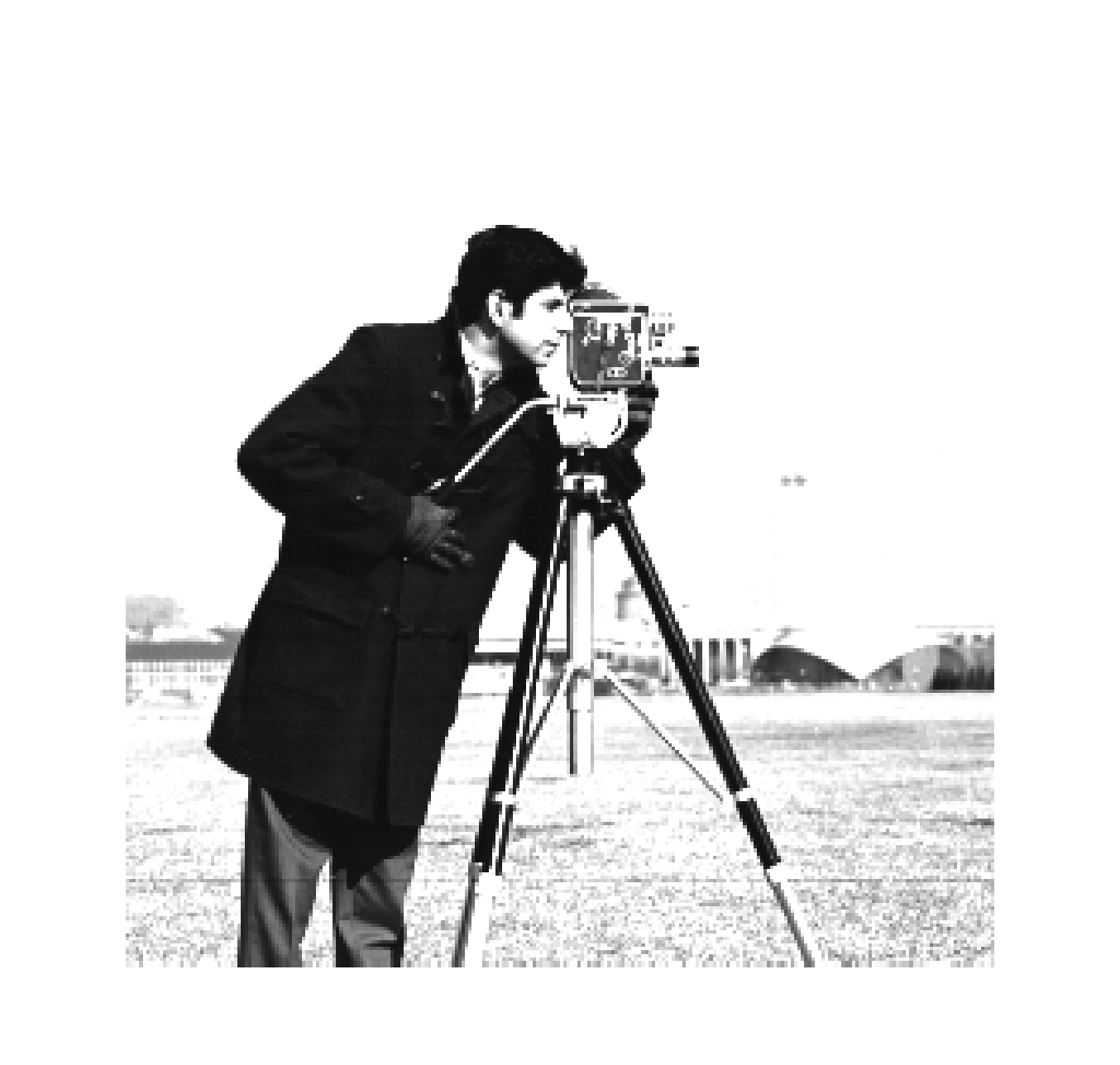}
            \caption{adaptive RSAV}
    \end{subfigure}
        \begin{subfigure}[b]{0.325\textwidth}            
            \includegraphics[width=\textwidth]{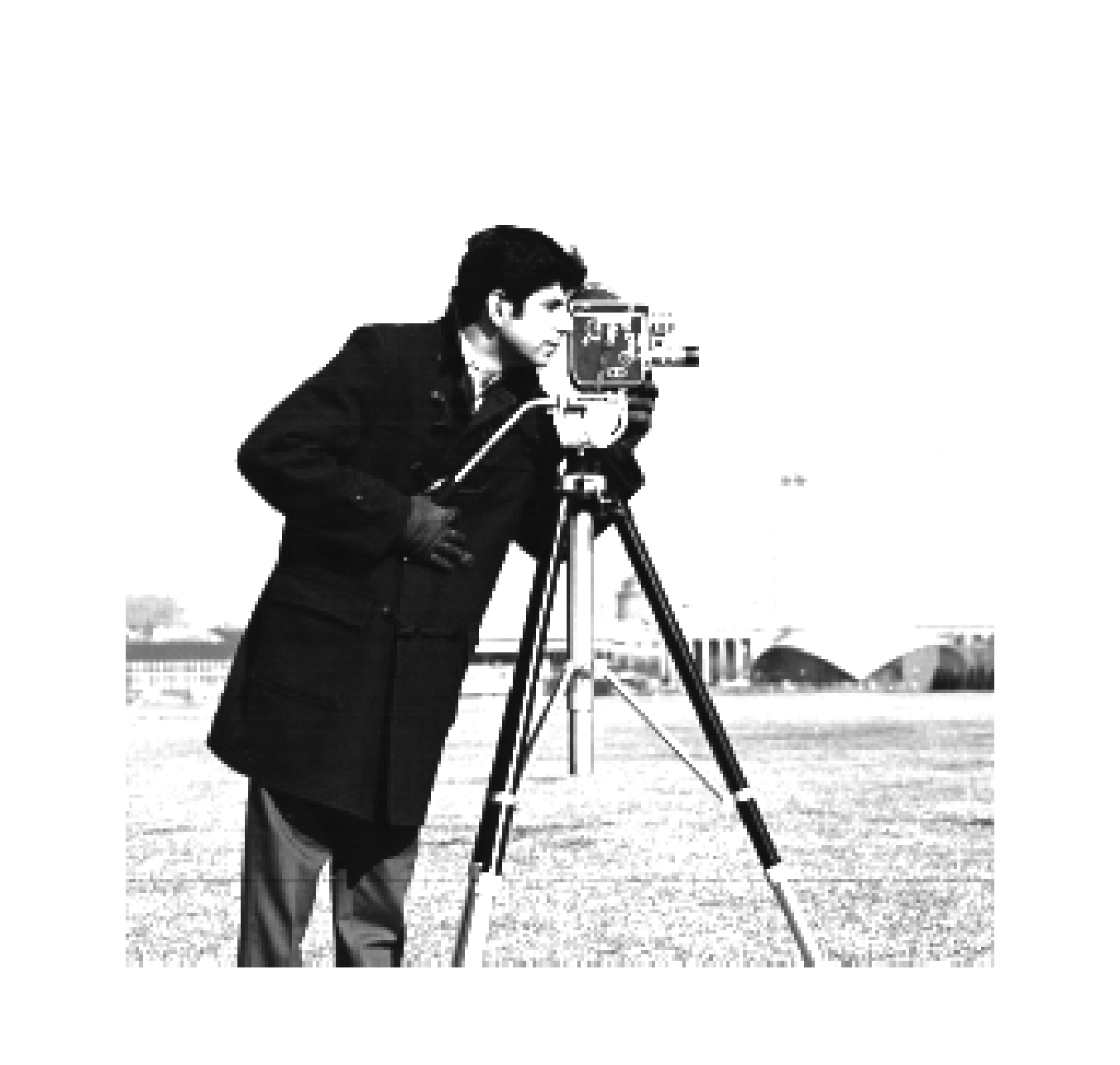}
            \caption{SD}
    \end{subfigure}
    \caption{Results after 20000 iterations for a phase retrieval problem with 
    $z_*$ being a $256\times 256$ real image of camera man,
      $m=6$ Gaussian random masks and a random initial guess. The vanilla gradient descent (GD) uses nearly largest stable constant step size $\delta t=0.5$}
    \label{real-image}
\end{figure}

\begin{figure}[H]            
	\includegraphics[width=0.30\textwidth]{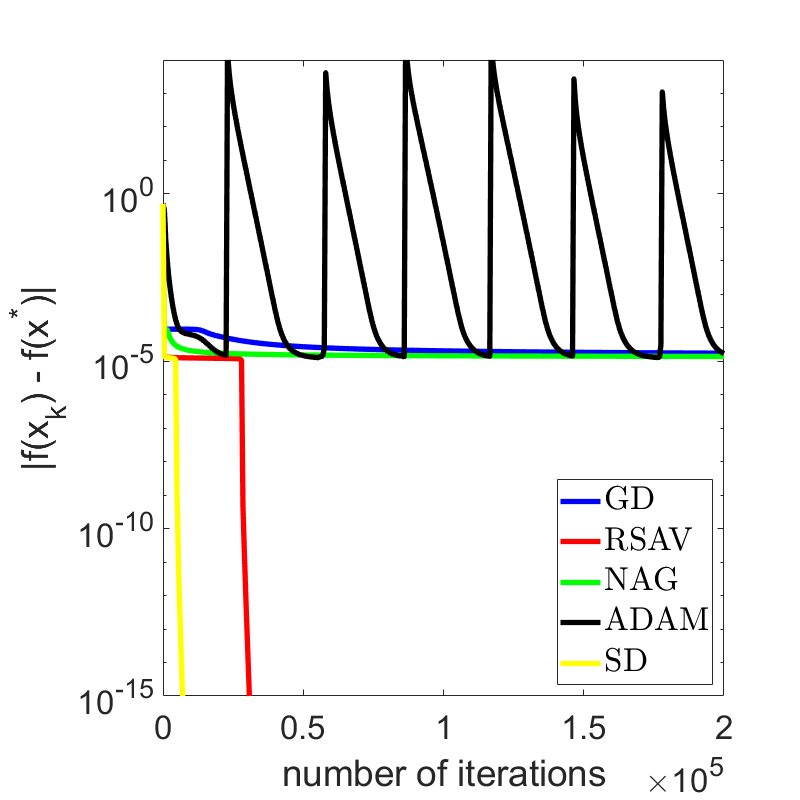}
	\includegraphics[width=0.30\textwidth]{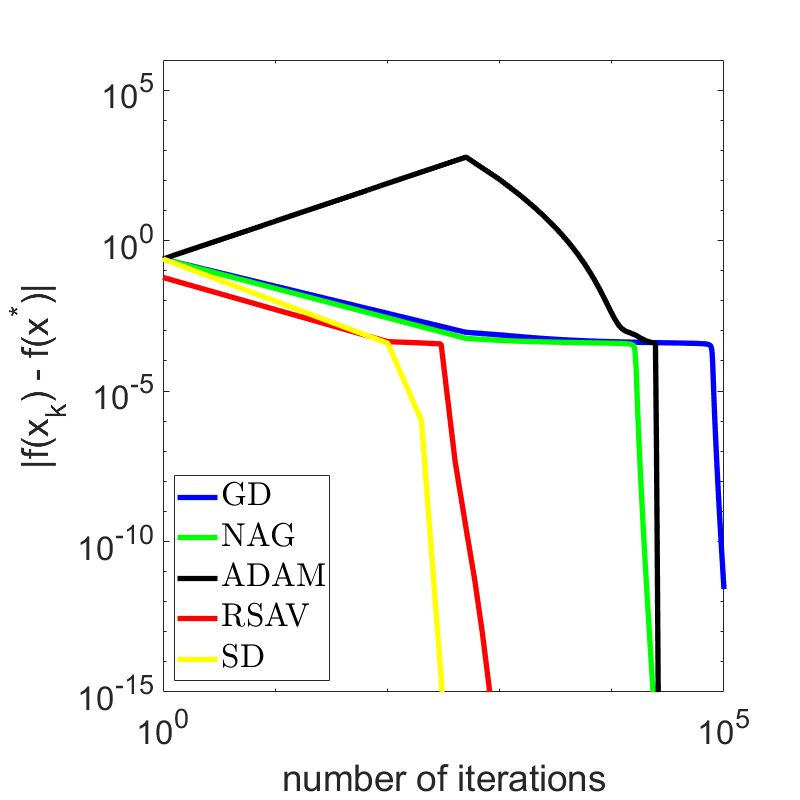}
	\includegraphics[width=0.30\textwidth]{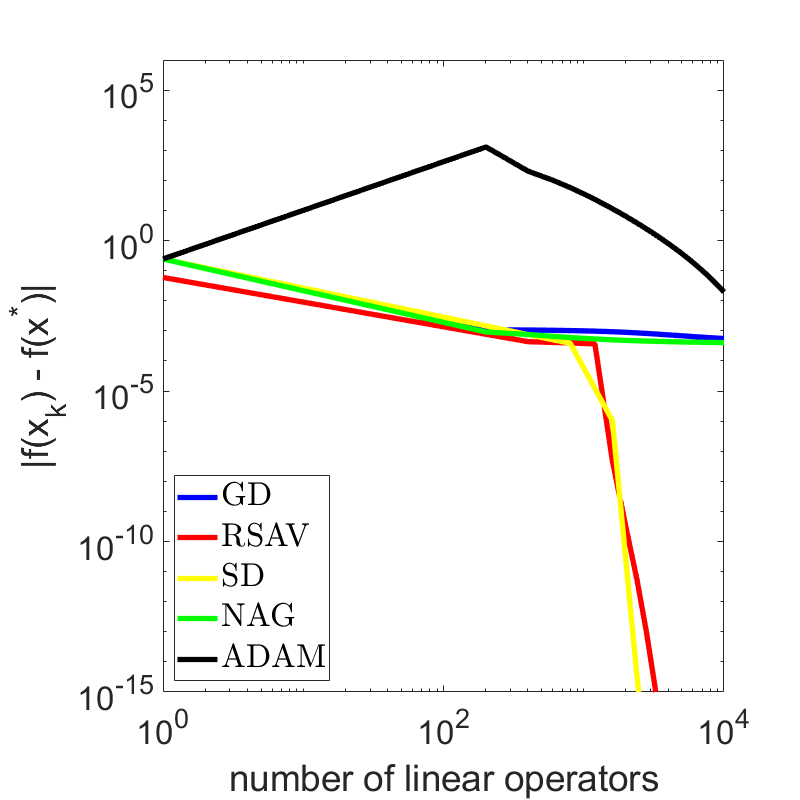}
	\caption{ (Left) Loss of different optimization algorithms V.S. number of iteration  for phase retrieval: the real image of camera man using $6$ Gaussian masks; (Middle) Loss of different optimization algorithms V.S. number of iteration  for phase retrieval: the complex image of golden balls using $10$ Gaussian masks. (Right) Loss of different optimization algorithms V.S. number of iteration  for phase retrieval:  the complex image of golden balls using $10$ Gaussian masks. The vanilla gradient descent (GD) uses nearly largest stable step size $\delta t=0.5$. }\label{phaseretrieval}
\end{figure}

\begin{figure}[H]            
	\includegraphics[width=0.45\textwidth]{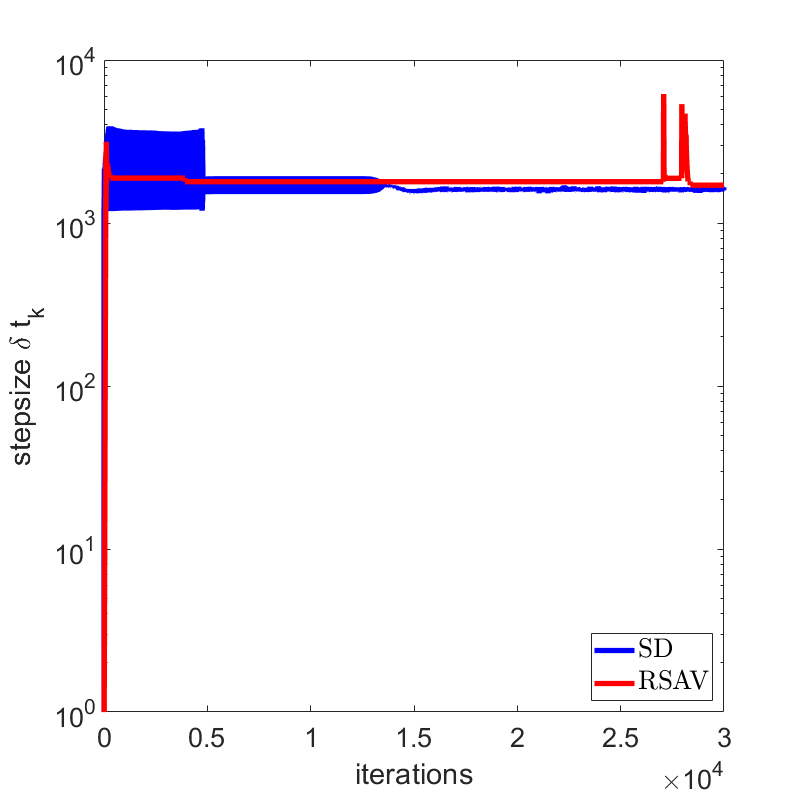}
	\includegraphics[width=0.45\textwidth]{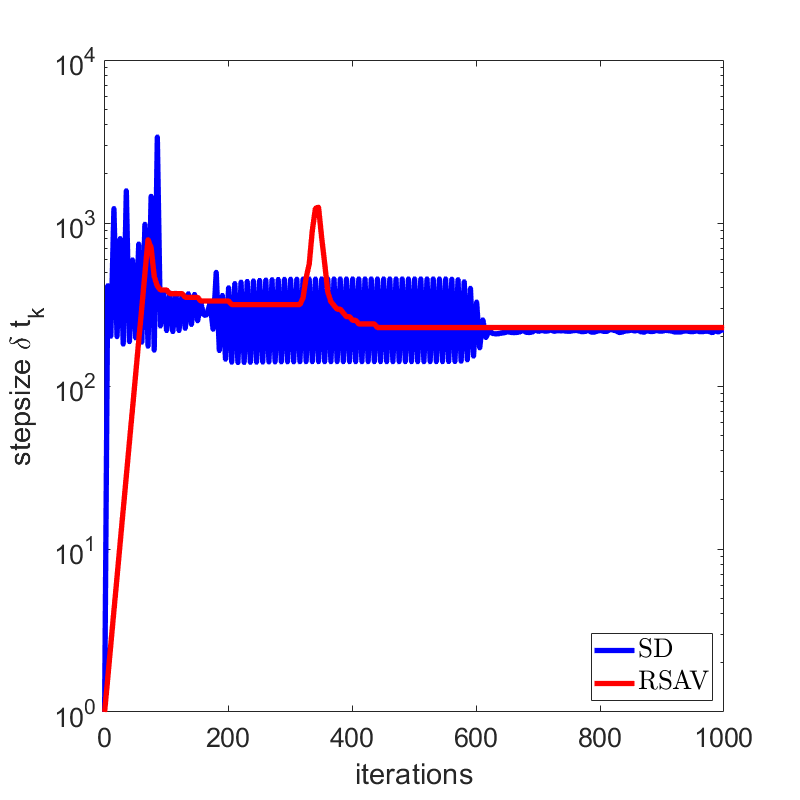}
	\caption{The value of $\delta_k$ in each iteration  for phase retrieval: Left: the real image of camera man using $6$ Gaussian masks; Right: the complex image of golden balls using $10$ Gaussian masks.}\label{phaseretrieval_step}
\end{figure}

\subsection{Recommendation System}
Consider applying the optimization scheme to train a recommendation system based on matrix factorization model. Given a rate matrix $R\in\mathbb{R}^{m\times n}$ where $m$ is the number of users and $n$ is the number of items, the model learns the user embedding matrix $X\in \mathbb{R}^{m\times d}$ and item embedding matrix $Y\in \mathbb{R}^{n\times d}$ such that the product $ XY^T$ is a good approximation for $R$. Here, $d$ is the embedding dimension and usually much smaller than $m$ and $n$. Denote the user and item matrix by $X=[X_1,\ldots,X_u,\ldots, X_m]^T$ and $Y=[Y_1,\ldots,Y_i,\ldots, Y_n]^T$, we have the loss function as 
\begin{equation}
    f(X,Y) = \frac{1}{N_{\kappa}}\sum_{(u,i)\in\kappa}(R_{u,i} - X_uY^T_i)^2 + \lambda_{u}\sum_{u} \|X_u\|^2_2 + \lambda_{i}\sum_{i} \|Y_i\|^2_2,
\end{equation}
where $\kappa$ the training set that the $(u,i)$ pairs for which $R_{u,i}$ is known, $N_{\kappa}$ is the number of training data, $\lambda_u$ and $\lambda_i$ are the penalty parameters for embedding matrix. We train the model with the MovieLens 100K dataset \cite{harper2015movielens} which contains $100,000$ ratings $(1-5)$ from $943$ users on $1682$ movies. There is $80\%$ data split as the training data and the rest date is used for the testing data, e.g., the training data set $\kappa$ has size $80,000$.
All algorithms use the mini-batch gradient with batch size $80$.
For $l_2$ regularization, we set $\lambda_{u} = \lambda_{i} = 10^{-4}$. For the linear operator $\mathcal{L} = \lambda I - \sigma \Delta$ in GD and RSAV, we let $\lambda = 10^{-4}$ and $\sigma = 0.1$.
In  Figure \ref{training}, for the training step, we run $10,000$ iterations for the mini-batch gradient based methods with batch size $80$,
which is equal to $10$ epochs. The result on the test data is shown in Table \ref{table-test-data}.
{We can see that RSAV performs well in the training step, though its testing result is not the best, which suggests issues of overfitting during the training step. This is more or less a modelling issue, rather than the optimizaiton issue.}
\begin{figure}[H]            
    \includegraphics[width=0.6\textwidth]{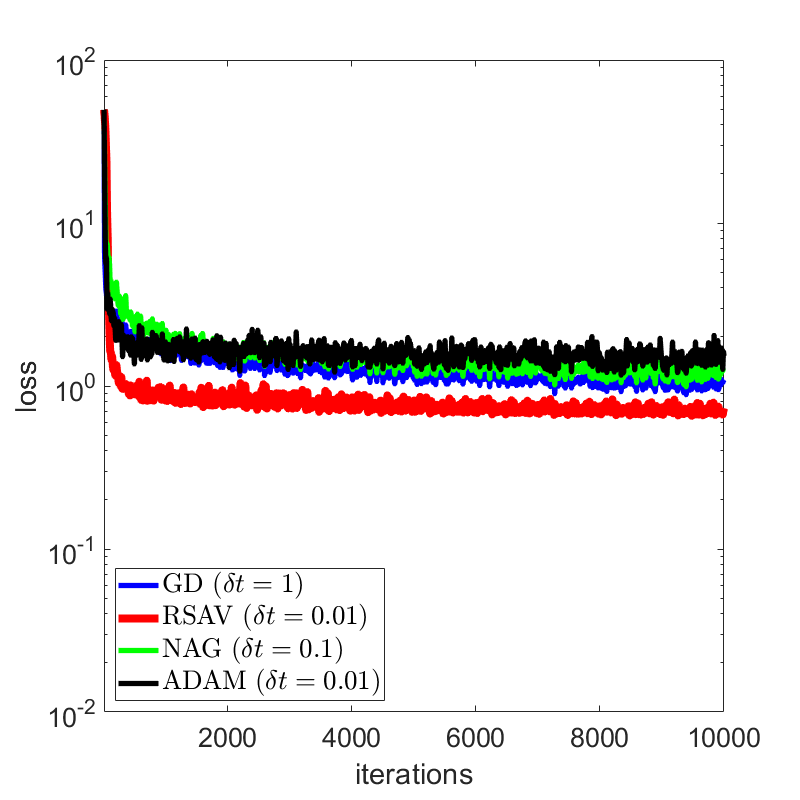}
    \caption{The training  loss curve of different optimization algorithms for Recommendation System.
    Here GD refers to GD ($\mathcal{L} = \lambda I - \sigma \Delta$) and RSAV refers to the adaptive RSAV ($\mathcal{L} = \lambda I - \sigma \Delta$).}
    \label{training}
\end{figure}

\begin{table}[!ht]
    \begin{tabular}{|c|c|c|c|c|}
        \hline
           step-size $\delta t$ &  $0.01$ & $0.1$ & $1$ & $10$\\ 
        \hline
           GD ($\mathcal{L} = \lambda I - \sigma \Delta $)&   4.6504 & 2.1465 &   1.6438 &  diverge\\
        \hline
           NAG &    2.1451 & 1.6439 &   diverge & diverge\\
        \hline
           ADAM &   1.8820 & 4.7194  & diverge & diverge\\
        \hline
           adaptive RSAV ($\mathcal{L} = \lambda I - \sigma \Delta $)&   1.9090 & 1.9102 & 1.9156 & 1.9156\\
        \hline
    \end{tabular}
    \caption{The loss function on the test data after $10,000$ training iterations (10 epochs) with different step-sizes. Here {\it diverge} means that the training step already diverges.}
    \label{table-test-data}
\end{table}

\section{Convergence study for some SAV based algorithms}
\label{sec-convergence}
In this section, we consider a more general version of the SAV scheme based on  the following expanded system
\begin{equation}\label{savq-general}
	\begin{cases}
		\theta_t=-\left(\frac{r}{[f(\theta)+C]^q}\nabla f(\theta)+\mathcal{L}\theta- \mathcal{L} \theta\right)\\
		r_t=q [f(\theta)+C]^{q-1} ( \nabla f(\theta), \theta_t),
	\end{cases}
\end{equation}
where   $r(t)=[f(\theta)+C]^q$ and $q\in (0,1)$. Note that \eqref{sav-general} is a special case of the above formulation with $q=\frac12$.
Similar to  \eqref{scheme-general}, we can construct a SAV scheme for \eqref{savq-general} as follows:
\begin{equation}\label{RSAVq}
	\begin{cases}
		\frac{\theta_{k+1}-\theta_k}{\delta t}=-\left(\frac{{r}_{k+1}}{[f(\theta_k)+C]^{q}}\nabla f(\theta_k) + \mathcal{L}(\theta_{k+1} - \theta_{k})\right)\\
		\frac{{r}_{k+1}-r_{k}}{\delta t}=q[f(\theta_k)+C]^{q-1} ( \nabla f(\theta_k), \frac{\theta_{k+1}-\theta_{k}}{\delta t}).
	\end{cases}
\end{equation}
\subsection{Interpretation of the SAV method as a line search method}
\label{sec-explicit-formula}
Let $A=(I+\delta t \mathcal{L})$. 
The system \eqref{RSAVq} can be rewrite as 
\begin{equation*} 
	\begin{bmatrix}
		A & \delta t \frac{\nabla f(\theta_k)}{[f(\theta_k)+C]^q}\\
		-q [f(\theta_k)+C]^{q-1}   \nabla f(\theta_k) & 1 
	\end{bmatrix}
	\begin{bmatrix}
		\theta_{k+1}\\
		 {r}_{k+1}
	\end{bmatrix} 
	= 
	\begin{bmatrix}
		A \theta_{k}\\
		r_{k}-q [f(\theta_k)+C]^{q-1} ( \nabla f(\theta_k),  \theta_k ) 
    \end{bmatrix}
\end{equation*}
After a simple Gaussian elimination, we obtain an explicit update formula for 
\eqref{RSAVq}:
\[\begin{cases}
 {r}_{k+1} & = \frac{1}{1+\delta t q \frac{ ( \nabla f(\theta_k), A^{-1} \nabla f(\theta_k))}{f(\theta_k)+C}}r_k\\
	\theta_{k+1} & = \theta_k-\frac{{r}_{k+1} }{[f(\theta_k)+C]^q} \delta t A^{-1}\nabla f(\theta_k)
\end{cases}.\] Notice that the scheme above can be regarded as a line search method:
\begin{subequations}
	\begin{align*}
		\theta_{k+1} &=\theta_k+\alpha_k P_k\\
		P_k &=-A^{-1}\nabla f(\theta_k)\\ 
		\alpha_k &=\frac{\delta t}{1+\delta t q \frac{ ( \nabla f(\theta_k), A^{-1} \nabla f(\theta_k))}{f(\theta_k)+C}}\frac{r_k }{[f(\theta_k)+C]^q} > 0,
	\end{align*}
\end{subequations}
with a search direction $P_k$ and step size $\alpha_k$.

The step size $\alpha_k$ is guranteed to be positive. On the other hand, it is difficult to establish any {\it a priori} control of $\alpha_k$, and in practice $\alpha_k$ could become very  small if $r_k$ becomes very small. 
To avoid small $r_k$, we consider a special version of SAV method by 
redefining $r_k=[f(\theta_k)+C]^q$, then we have
$$\alpha_k = \frac{\delta t}{1+\delta t q \frac{ ( \nabla f(\theta_k), A^{-1} \nabla f(\theta_k))}{f(\theta_k)+C}}.$$
In this case, we can view $q$ as a
parameter, and 
the SAV method with   $r_k=[f(\theta_k)+C]^{q}$ at every iteration becomes the following line search method:
\begin{subequations}
	\label{explicitiSAV-linesearch}
	\begin{align}
		\theta_{k+1}&=\theta_k+\alpha_k P_k\label{explicitiSAV-linesearch1}\\
		P_k&=-A^{-1}\nabla f(\theta_k)\\ 
		\alpha_k&=\frac{\delta t}{1+\delta t q \frac{ ( \nabla f(\theta_k), A^{-1} \nabla f(\theta_k))}{f(\theta_k)+C}},
	\end{align}
\end{subequations}
which is equivalent to
\begin{equation}\label{savq-general-2}
	\begin{cases}
		r_k=[f(\theta_k)+C]^{q}\\
		\frac{\theta_{k+1}-\theta_k}{\delta t}=-\left(\frac{\tilde{r}_{k+1}}{[f(\theta_k)+C]^{q}}\nabla f(\theta_k) + \mathcal{L}(\theta_{k+1} - \theta_{k})\right)\\
		\frac{\tilde{r}_{k+1}-r_{k}}{\delta t}=q[f(\theta_k)+C]^{q-1} ( \nabla f(\theta_k), \frac{\theta_{k+1}-\theta_{k}}{\delta t}).
	\end{cases}
\end{equation}


In particular, for any $\mathcal{L}\geq 0$,  $A^{-1}$ is always positive definite, thus the search direction $P_k=-A^{-1}\nabla f(\theta_k)$  is always a descent direction, i.e., $-\nabla f^T(\theta_k) P_k=\nabla f(\theta_k)^TA^{-1}\nabla f(\theta_k)> 0$. 
The Wolfe condition \cite{wolfe1969convergence} for the line search method \eqref{explicitiSAV-linesearch} is: there exists  $0<c_1<c_2<1$ such that 
\begin{subequations}\label{Wolfe}
	\begin{align}
		f(\theta_k+\alpha_k P_k)&\leq f(\theta_k)+c_1\alpha_k \nabla f(\theta_k)^T P_k\\
		\nabla f(\theta_k+\alpha_k P_k)^T P_k&\geq c_2 \nabla f(\theta_k)^T P_k.
	\end{align}
\end{subequations}
We recall first  the following result  \cite{nocedal2006numerical}:
\begin{thm}
	Assume $f(\theta)\in C^1$ and $f(\theta)$ is bounded from below. For any descent direction $P_k$,
	there exist intervals of step lengths satisfying the Wolfe condition.
\end{thm}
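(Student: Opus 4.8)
The plan is to reduce this multidimensional statement to a one-dimensional problem along the fixed search direction. First I would define the scalar function $\phi(\alpha) = f(\theta_k + \alpha P_k)$ for $\alpha \geq 0$, which is $C^1$ since $f$ is, with derivative $\phi'(\alpha) = \nabla f(\theta_k + \alpha P_k)^T P_k$. Because $P_k$ is a descent direction we have $\phi'(0) = \nabla f(\theta_k)^T P_k < 0$. I would then introduce the Armijo line $l(\alpha) = \phi(0) + c_1 \alpha \phi'(0)$, whose slope $c_1 \phi'(0)$ is negative but, since $c_1 \in (0,1)$, strictly less steep than $\phi'(0)$; in this notation the sufficient-decrease condition becomes $\phi(\alpha) \leq l(\alpha)$ and the curvature condition becomes $\phi'(\alpha) \geq c_2 \phi'(0)$.

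The key step is to locate the first positive crossing of $\phi$ and $l$. Since $f$ is bounded from below, so is $\phi$, whereas $l(\alpha) \to -\infty$ as $\alpha \to \infty$ because its slope is negative. As $\phi(0) = l(0)$ and $\phi$ decreases strictly faster than $l$ at the origin (indeed $\phi'(0) < c_1 \phi'(0) < 0$), we get $\phi(\alpha) < l(\alpha)$ for small $\alpha > 0$, while the boundedness of $\phi$ forces $\phi(\alpha) > l(\alpha)$ for large $\alpha$. Continuity of $\phi - l$ and the intermediate value theorem then yield a smallest $\alpha'' > 0$ with $\phi(\alpha'') = l(\alpha'')$, and by construction the sufficient-decrease condition holds strictly for every $\alpha \in (0,\alpha'')$.

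Next I would extract a point satisfying the curvature condition. Applying the mean value theorem to $\phi$ on $[0,\alpha'']$ gives some $\alpha' \in (0,\alpha'')$ with $\phi(\alpha'') - \phi(0) = \alpha'' \phi'(\alpha')$. Substituting the crossing identity $\phi(\alpha'') - \phi(0) = c_1 \alpha'' \phi'(0)$ yields $\phi'(\alpha') = c_1 \phi'(0)$. Since $\phi'(0) < 0$ and $c_1 < c_2$, multiplying the negative quantity $\phi'(0)$ by the smaller coefficient gives $c_1 \phi'(0) > c_2 \phi'(0)$, so $\phi'(\alpha') = c_1 \phi'(0) > c_2 \phi'(0)$, i.e. the curvature condition holds at $\alpha'$; and because $\alpha' < \alpha''$, the sufficient-decrease condition holds strictly there too. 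Hence $\alpha'$ satisfies both Wolfe conditions, and since $\phi$ and $\phi'$ are continuous and both inequalities are strict at $\alpha'$, they persist on an open neighborhood of $\alpha'$, giving the claimed interval of admissible step lengths.

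I expect the only delicate point to be the clean extraction of the first crossing $\alpha''$ from boundedness below — in particular ensuring one selects the \emph{smallest} crossing so that the Armijo inequality is available throughout $(0,\alpha'')$ — together with careful bookkeeping of the sign reversals that accompany multiplying the negative quantity $\phi'(0)$ by the constants $c_1 < c_2$. Everything else is a routine application of the intermediate value and mean value theorems.
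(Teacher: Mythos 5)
Your proof is correct; note that the paper itself supplies no proof of this theorem, merely recalling it from \cite{nocedal2006numerical}, and your argument is precisely the canonical one found there (Lemma 3.1 of Nocedal--Wright): reduce to the one-dimensional function $\phi(\alpha)=f(\theta_k+\alpha P_k)$, use boundedness below of $f$ against the unboundedness of the Armijo line $l(\alpha)=\phi(0)+c_1\alpha\phi'(0)$ to extract the smallest positive crossing $\alpha''$, and apply the mean value theorem on $[0,\alpha'']$ to produce $\alpha'$ with $\phi'(\alpha')=c_1\phi'(0)>c_2\phi'(0)$. Your sign bookkeeping (that $\phi'(0)<c_1\phi'(0)<0$ gives $\phi<l$ near the origin, and that $c_1<c_2$ flips to $c_1\phi'(0)>c_2\phi'(0)$ for the negative quantity $\phi'(0)$) is handled correctly, and since both Wolfe inequalities hold strictly at $\alpha'$, continuity of $\phi$ and $\phi'$ legitimately yields the claimed open interval of admissible step lengths.
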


Notice that $\alpha(\delta t, q)=\frac{\delta t}{1+\delta t q \frac{ ( \nabla f(\theta_k), A^{-1} \nabla f(\theta_k))}{f(\theta_k)+C}}$ is an increasing function of $\delta t$ and an decreasing  function of $q$, thus 
there exists $\delta t$ and $q$ such that
$\alpha_k=\frac{\delta t}{1+\delta t q \frac{ ( \nabla f(\theta_k), A^{-1} \nabla f(\theta_k))}{f(\theta_k)+C}}$ satisfies the Wolfe conditions \eqref{Wolfe}. 

We recall below another result  \cite{nocedal2006numerical}:
\begin{thm}
	Assume $f(\theta)\in C^1$, $f(\theta)$ is bounded from below and $\nabla f(\theta)$ is Lipschitz continuous. Let $\cos\phi_k=\frac{-\nabla f(\theta_k)^T P_k}{\|\nabla f(\theta_k)\| \|P_k\|}$. If $P_k$ is a descent direction and $\alpha_k$ satisfies the Wolfe Conditions, 
	then the iteration $\theta_{k+1}=\theta_k+\alpha_k P_k$ satisfies 
	\begin{equation}
		\sum_{k\geq 0} \cos^2\phi_k \|\nabla f(\theta_k)\|^2<\infty.
		\label{Zoutendijk}
	\end{equation}
\end{thm}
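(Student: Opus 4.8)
The plan is to convert the two Wolfe conditions \eqref{Wolfe} into a single per-iteration decrease estimate of the form $f(\theta_{k+1})\le f(\theta_k)-c\,\cos^2\phi_k\,\|\nabla f(\theta_k)\|^2$ for some constant $c>0$, and then telescope this over $k$ using the fact that $f$ is bounded from below. The crux is to extract a \emph{lower} bound on the step length $\alpha_k$ from the curvature condition together with the Lipschitz hypothesis, which is what couples the geometry of the search direction to the guaranteed decrease.

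First I would subtract $\nabla f(\theta_k)^T P_k$ from both sides of the curvature condition in \eqref{Wolfe} to get $\bigl(\nabla f(\theta_{k+1})-\nabla f(\theta_k)\bigr)^T P_k\ge (c_2-1)\nabla f(\theta_k)^T P_k$. On the left, Cauchy--Schwarz and the Lipschitz continuity of $\nabla f$ with constant $L$ give $\bigl(\nabla f(\theta_{k+1})-\nabla f(\theta_k)\bigr)^T P_k\le L\|\theta_{k+1}-\theta_k\|\,\|P_k\|=L\alpha_k\|P_k\|^2$, where I used $\theta_{k+1}-\theta_k=\alpha_k P_k$. Combining the two inequalities and dividing by $L\|P_k\|^2$ yields
\begin{equation*}
\alpha_k\ \ge\ \frac{1-c_2}{L}\,\frac{-\nabla f(\theta_k)^T P_k}{\|P_k\|^2},
\end{equation*}
where the right-hand side is positive because $P_k$ is a descent direction ($\nabla f(\theta_k)^T P_k<0$) and $c_2<1$.

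Next I would substitute this lower bound into the sufficient-decrease (Armijo) condition $f(\theta_{k+1})\le f(\theta_k)+c_1\alpha_k\nabla f(\theta_k)^T P_k$; since $\nabla f(\theta_k)^T P_k<0$, replacing $\alpha_k$ by its lower bound only strengthens the decrease, giving
\begin{equation*}
f(\theta_{k+1})\ \le\ f(\theta_k)-\frac{c_1(1-c_2)}{L}\,\frac{\bigl(\nabla f(\theta_k)^T P_k\bigr)^2}{\|P_k\|^2}.
\end{equation*}
By the definition $\cos\phi_k=\frac{-\nabla f(\theta_k)^T P_k}{\|\nabla f(\theta_k)\|\,\|P_k\|}$, the quotient is exactly $\cos^2\phi_k\,\|\nabla f(\theta_k)\|^2$, so with $c:=c_1(1-c_2)/L>0$ we obtain $f(\theta_{k+1})\le f(\theta_k)-c\,\cos^2\phi_k\,\|\nabla f(\theta_k)\|^2$.

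Finally I would sum this inequality from $0$ to $k$, which telescopes to $c\sum_{j=0}^{k}\cos^2\phi_j\,\|\nabla f(\theta_j)\|^2\le f(\theta_0)-f(\theta_{k+1})$. Since $f$ is bounded from below by some $f_{\min}$, the right-hand side is at most $f(\theta_0)-f_{\min}$, a finite constant independent of $k$; letting $k\to\infty$ gives the claimed bound \eqref{Zoutendijk}. I expect the main obstacle to be the first step: correctly pairing the curvature condition with the Lipschitz estimate to bound $\alpha_k$ from below, and tracking the signs carefully (both $c_2-1$ and $\nabla f(\theta_k)^T P_k$ are negative, so their interplay must be handled so that the final constant $c$ comes out positive). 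The remaining telescoping argument is routine once that estimate is in hand.
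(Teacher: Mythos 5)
Your proof is correct and is exactly the classical Zoutendijk argument: the paper itself gives no proof but recalls this theorem from its reference \cite{nocedal2006numerical}, where precisely your chain of steps (curvature condition plus Lipschitz bound to get $\alpha_k\ge\frac{1-c_2}{L}\frac{-\nabla f(\theta_k)^TP_k}{\|P_k\|^2}$, substitution into the Armijo condition, and telescoping against the lower bound on $f$) appears as the standard proof. Your sign-tracking and the identification of the quotient with $\cos^2\phi_k\|\nabla f(\theta_k)\|^2$ are both handled correctly, so there is nothing to fix.
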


Let $\lambda_{\min} (A)$ and   $\lambda_{\max} (A)$ be the smallest and largest eigenvalues of the real symmetric positive definite matrix $A$, then by the Courant-Fischer-Weyl min-max principle \cite{courant2008methods} and the spectral norm $\|A\|=\lambda_{\max} (A)$, we have 
\[\frac{P_k^T A P_k}{\|P_k\|^2}\geq \lambda_{\min} (A),\quad \|A P_k\|\leq \|A\| \|P_k\|=\lambda_{\max} (A) \|P_k\|,\]
thus 
\[\cos\phi_k=\frac{P_k^T A P_k}{\|A P_k\| \|P_k\|}=\frac{P_k^T A P_k}{\|P_k\|^2} \frac{\| P_k\|}{\|A P_k\|}\geq \frac{\lambda_{\min} (A)}{\lambda_{\max} (A)}.\]
Therefore, the uniform lower bound on $\cos \phi_k$ and  \eqref{Zoutendijk} implies that $\|\nabla f(\theta_k)\|\rightarrow 0.$

Thus the convergence of the  SAV method  \eqref{explicitiSAV-linesearch} is ensured if using a line search to find $\delta t, q$ such that $\alpha_k$ satisfies the Wolfe condition \eqref{Wolfe}. We observe that the above algorithm involves computing $A^{-1}\nabla f(\theta_k)$, and evaluation of $f(\theta_k)$ and $f(\theta_{k+1})$.  

\begin{remark} 
	In practice, one can use backtracking line search on $\alpha_k$ 
	to ensure that the Wolfe conditions are satisfied. 
	This in general it does not seem advantageous over  a simple  backtracking line search on $\alpha$. However, for the SAV gradient descent method \eqref{sav-explicit}, i.e., $A=I$, our numerical observation is that the SAV scheme is often  more efficient than the backtracking line search on $\alpha$. 
	With $A=I$, the scheme \eqref{explicitiSAV-linesearch} reduces to the following  SAV gradient descent method with two parameters $\delta t>0$ and $q_k>0$:
	\begin{subequations}
		\label{explicitiSAV-GD}
		\begin{align}
			\theta_{k+1}&=\theta_k-\alpha_k \nabla f(\theta_k)\\
			\alpha_k&=\frac{\delta t}{1+\delta t q_k \frac{ \| \nabla f(\theta_k)\|^2}{f(\theta_k)+C}}
		\end{align}
	\end{subequations}
\end{remark}

\subsection{Standard convergence results}
We recall that if $\alpha_k$ in the line search method \eqref{explicitiSAV-linesearch} satisfies the Goldstein-Armijo rule \cite{armijo1966minimization,goldstein1965steepest}:  there exists $0<c_1<c_2<1$ such that

\begin{equation}
	f(\theta_k)-c_2\alpha_k \|\nabla f(\theta_k)\|^2
	\leq f(\theta_k-\alpha_k \nabla f(\theta_k))\leq f(\theta_k)-c_1\alpha_k \|\nabla f(\theta_k)\|^2,
\end{equation}

then it is shown (cf. Theorem 2.1.14  in \cite{nesterov2013introductory}) that   $\|\nabla f(\theta_k)\|\rightarrow 0$.

Theorem 2.1.14 in \cite{nesterov2013introductory} can be easily adapted to prove the following result for the line search method \eqref{explicitiSAV-linesearch}: 
\begin{thm}
	Assume  that $f(\theta)$ is convex and $\nabla f(\theta)$ is Lipshitz continuous with the Lipshitz constant $L$, i.e., $\|\nabla f(y)-\nabla f(x)\|\leq L\|x-y\|$.  
	If $\nabla f(\theta_*)=0$ and $\alpha_k\in(0,\frac{2}{L})$, then 
	\[\|\theta_{k+1}-\theta_*\|^2\leq \|\theta_{k}-\theta_*\|^2-\alpha_k(\frac{2}{L}-\alpha_k)\|\nabla f(\theta_k)\|^2 \]
	and
	\[f(\theta_k)-f(\theta_*)\leq \frac{1}{[f(\theta_0)-f(\theta_*)]^{-1}+\|\theta_0-\theta_*\|^{-2} \sum_{k}\alpha_k(1-\frac{L}{2}\alpha_k) }.\]
\end{thm}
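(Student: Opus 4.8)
The plan is to adapt the standard argument (Theorem 2.1.14 in \cite{nesterov2013introductory}) by treating the iteration in the gradient-descent form $\theta_{k+1}=\theta_k-\alpha_k\nabla f(\theta_k)$, i.e. the case $A=I$, which is the form for which the stated conclusions (involving $\|\nabla f(\theta_k)\|^2$) are natural. The whole proof rests on three standard consequences of convexity together with $L$-Lipschitz continuity of $\nabla f$: the descent lemma $f(y)\le f(x)+(\nabla f(x),y-x)+\frac{L}{2}\|y-x\|^2$, the convexity bound $f(\theta_k)-f(\theta_*)\le(\nabla f(\theta_k),\theta_k-\theta_*)$, and the co-coercivity inequality $(\nabla f(x)-\nabla f(y),x-y)\ge\frac{1}{L}\|\nabla f(x)-\nabla f(y)\|^2$.

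For the first inequality I would expand $\|\theta_{k+1}-\theta_*\|^2=\|\theta_k-\theta_*\|^2-2\alpha_k(\nabla f(\theta_k),\theta_k-\theta_*)+\alpha_k^2\|\nabla f(\theta_k)\|^2$ and then apply co-coercivity with $y=\theta_*$, using $\nabla f(\theta_*)=0$ to obtain $(\nabla f(\theta_k),\theta_k-\theta_*)\ge\frac{1}{L}\|\nabla f(\theta_k)\|^2$. Substituting this bound collapses the right-hand side to $\|\theta_k-\theta_*\|^2-\alpha_k(\frac{2}{L}-\alpha_k)\|\nabla f(\theta_k)\|^2$, which is the claimed contraction. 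A crucial byproduct is that, since $\alpha_k\in(0,\frac{2}{L})$ makes the subtracted term nonnegative, the sequence $\{\|\theta_k-\theta_*\|\}$ is monotonically non-increasing, so $\|\theta_k-\theta_*\|\le\|\theta_0-\theta_*\|$ for every $k$; this uniform bound is exactly what later drives the rate estimate.

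For the convergence rate I would set $\Delta_k=f(\theta_k)-f(\theta_*)$ and $\omega_k=\alpha_k(1-\frac{L}{2}\alpha_k)>0$. Applying the descent lemma to the step $\theta_{k+1}-\theta_k=-\alpha_k\nabla f(\theta_k)$ yields $\Delta_{k+1}\le\Delta_k-\omega_k\|\nabla f(\theta_k)\|^2$. To turn this into a closed recursion in $\Delta_k$ alone, I would lower-bound the gradient through convexity and Cauchy--Schwarz: $\Delta_k\le(\nabla f(\theta_k),\theta_k-\theta_*)\le\|\nabla f(\theta_k)\|\,\|\theta_k-\theta_*\|\le\|\nabla f(\theta_k)\|\,\|\theta_0-\theta_*\|$, so that $\|\nabla f(\theta_k)\|^2\ge\Delta_k^2/\|\theta_0-\theta_*\|^2$. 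This produces the quadratic recursion $\Delta_{k+1}\le\Delta_k-\frac{\omega_k}{\|\theta_0-\theta_*\|^2}\Delta_k^2$.

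The final step, and the only place needing a little care, is converting this nonlinear recursion into the stated harmonic-type bound. Dividing through by $\Delta_k\Delta_{k+1}>0$ and using the monotonicity $\Delta_{k+1}\le\Delta_k$ (so that $\Delta_k/\Delta_{k+1}\ge1$) gives $\frac{1}{\Delta_{k+1}}-\frac{1}{\Delta_k}\ge\frac{\omega_k}{\|\theta_0-\theta_*\|^2}$; telescoping from $0$ to $k-1$ produces $\frac{1}{\Delta_k}\ge\frac{1}{\Delta_0}+\|\theta_0-\theta_*\|^{-2}\sum_j\omega_j$, which upon inversion is precisely the asserted estimate. I expect this telescoping manipulation---specifically the justification of replacing $\Delta_k/\Delta_{k+1}$ by $1$ using the already-established monotonic decrease of $\Delta_k$---to be the main (though minor) obstacle, since every other step reduces to direct substitution of the standard convexity and smoothness inequalities.
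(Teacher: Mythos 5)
Your proposal is correct and takes essentially the same route as the paper, which proves this theorem precisely by adapting Theorem 2.1.14 of Nesterov's book: co-coercivity $(\nabla f(\theta_k),\theta_k-\theta_*)\ge\frac{1}{L}\|\nabla f(\theta_k)\|^2$ for the distance contraction, the descent lemma plus convexity and Cauchy--Schwarz for the recursion $\Delta_{k+1}\le\Delta_k-\frac{\omega_k}{\|\theta_0-\theta_*\|^2}\Delta_k^2$, and telescoping of $1/\Delta_k$ using $\Delta_{k+1}\le\Delta_k$. The one step you leave implicit, dividing by $\Delta_k\Delta_{k+1}$, is harmless: if some $\Delta_k=0$ then $\nabla f(\theta_k)=0$, the iteration stalls at the minimizer, and the bound holds trivially.
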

So for convergence, we need $\sum\limits_{k=0}^\infty\alpha_k(1-\frac{L}{2}\alpha_k)=+\infty$, which can be ensured if $\alpha_k\in [a, b]\in (0,\frac{2}{L})$ for constant bounds $a>0$ and $b<\frac{2}{L}$.
Also, $\alpha_k<\frac{2}{L}$ will ensure  $f(\theta_{k+1})<f(\theta_k)$.

\subsection{Decreasing step sizes for the SAV gradient descent method}

We derive from \eqref{explicitiSAV-GD} that

$$\alpha_k=\frac{\delta t}{1+\delta t q_k \frac{ \| \nabla f(\theta_k)\|^2}{f(\theta_k)}}=\frac{1}{1/\delta t +q_k \frac{ \| \nabla f(\theta_k)\|^2}{f(\theta_k)}} \leq \min \left\{ \delta t, \frac{f(\theta_k)}{q_k  \| \nabla f(\theta_k)\|^2}\right\}.$$

For fixed $\delta t$, the above is often sufficient to ensure $f(\theta_{k+1})<f(\theta_k)$ when $\theta_k$ is far away from the minimizer. It can be understood as follows.

\begin{thm}
	
	Assume that  $f(\theta)$ is strongly convex, i.e., 
	$( \nabla f(y)-\nabla f(x), y-x)\geq m\|x-y\|^2$  with $m>0$, and $\nabla f(\theta)$ is Lipshitz continuous with the Lipshitz constant $L$.
	Let $\theta_*$ be the minimizer and assume  $f(\theta_*)=0$. 
	Then the following are sufficient conditions to ensure $\alpha_k<\frac{2}{L}$  for  the SAV gradient descent method  with two parameters \eqref{explicitiSAV-GD}:
	\begin{enumerate}
		\item For any $\delta t>0$,  $q_k> \frac{L^2}{4m^2}$. 
		\item Let $\delta t\equiv a\frac{2}{L}$ where $a>0$, $q_k> \frac{a-1}{a} \frac{L^2}{4m^2}$. 
	\end{enumerate}
	
\end{thm}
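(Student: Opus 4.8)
The plan is to reduce the claim $\alpha_k<\tfrac{2}{L}$ to a uniform lower bound on the ratio $\beta_k:=\|\nabla f(\theta_k)\|^2/f(\theta_k)$ that sits in the denominator of $\alpha_k$, and then settle both parameter regimes by elementary algebra. First I would rewrite
\[
\alpha_k=\frac{\delta t}{1+\delta t\,q_k\beta_k}=\frac{1}{\tfrac{1}{\delta t}+q_k\beta_k},
\]
so that, since all quantities are positive, $\alpha_k<\tfrac{2}{L}$ is equivalent to $\tfrac{1}{\delta t}+q_k\beta_k>\tfrac{L}{2}$. This makes transparent both why a large $\delta t$ is the adverse case (the term $\tfrac{1}{\delta t}$ vanishes) and why everything hinges on bounding $q_k\beta_k$ from below. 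Note that since $f$ is strongly convex and $\theta_k\neq\theta_*$ before convergence, $f(\theta_k)>f(\theta_*)=0$, so $\beta_k$ is well defined.

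The key step is to prove $\beta_k\ge \tfrac{2m^2}{L}$. Using $\nabla f(\theta_*)=0$, strong convexity applied to the pair $(\theta_k,\theta_*)$ gives $(\nabla f(\theta_k),\theta_k-\theta_*)\ge m\|\theta_k-\theta_*\|^2$, whence Cauchy--Schwarz yields $\|\nabla f(\theta_k)\|\ge m\|\theta_k-\theta_*\|$ and therefore $\|\nabla f(\theta_k)\|^2\ge m^2\|\theta_k-\theta_*\|^2$. On the other hand, the descent lemma for an $L$-Lipschitz gradient, together with $f(\theta_*)=0$ and $\nabla f(\theta_*)=0$, gives $f(\theta_k)\le \tfrac{L}{2}\|\theta_k-\theta_*\|^2$. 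Dividing these two estimates cancels $\|\theta_k-\theta_*\|^2$ and leaves $\beta_k\ge \tfrac{2m^2}{L}$, uniformly in $k$.

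With this in hand both conclusions are short. For the first condition (any $\delta t>0$), I would drop the positive term $\tfrac{1}{\delta t}$ to get $\alpha_k<\tfrac{1}{q_k\beta_k}\le \tfrac{L}{2m^2 q_k}$, and then $q_k>\tfrac{L^2}{4m^2}$ forces $\tfrac{L}{2m^2 q_k}<\tfrac{2}{L}$, giving $\alpha_k<\tfrac{2}{L}$. For the second condition, I keep $\tfrac{1}{\delta t}=\tfrac{L}{2a}$ and require $\tfrac{L}{2a}+q_k\tfrac{2m^2}{L}>\tfrac{L}{2}$; rearranging this inequality reproduces exactly $q_k>\tfrac{a-1}{a}\tfrac{L^2}{4m^2}$. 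One sees consistency in the limit $a\to\infty$, where the second threshold degenerates to the first.

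The only genuinely delicate point is the \emph{choice} of lower bound in the key step. The sharper Polyak--{\L}ojasiewicz estimate $\|\nabla f(\theta_k)\|^2\ge 2m\,f(\theta_k)$ would give $\beta_k\ge 2m$ and hence the weaker threshold $q_k>\tfrac{L}{4m}$; to recover precisely the stated constant $\tfrac{L^2}{4m^2}$ one must instead use the cruder $\theta_*$-based estimates above, in which $f$ is controlled from above by $\tfrac{L}{2}\|\theta_k-\theta_*\|^2$ rather than by its PL surrogate. Identifying which of these bounds matches the claimed constants is the main thing to get right; the remaining manipulations are routine.
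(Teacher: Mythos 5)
Your proof is correct and follows essentially the same route as the paper: both hinge on the key estimate $\|\nabla f(\theta_k)\|^2/f(\theta_k)\ge \frac{2m^2}{L}$, obtained by combining $\|\nabla f(\theta_k)\|\ge m\|\theta_k-\theta_*\|$ (strong convexity plus Cauchy--Schwarz) with $f(\theta_k)\le \frac{L}{2}\|\theta_k-\theta_*\|^2$ (Lipschitz gradient at $\theta_*$), and then comparing $\frac{1}{\delta t}+q_k\,\|\nabla f(\theta_k)\|^2/f(\theta_k)$ against $\frac{L}{2}$. If anything you are slightly more complete: the paper's written proof only carries out case (1) explicitly via $\alpha_k\le \frac{f(\theta_k)}{q_k\|\nabla f(\theta_k)\|^2}$, whereas you also verify case (2) by direct rearrangement, and your closing observation that the Polyak--{\L}ojasiewicz bound $\|\nabla f\|^2\ge 2m f$ would yield the less restrictive threshold $q_k>\frac{L}{4m}$ (whereas the stated constant $\frac{L^2}{4m^2}$ requires the cruder $\theta_*$-based estimates) is accurate.
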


\begin{remark}
	The first sufficient condition implies that $\alpha_k=\frac{\delta t}{1+\delta t q_k \frac{ \| \nabla f(\theta_k)\|^2}{f(\theta_k)}}$ will be a decreasing step size for any $\delta t$ if $q_k\equiv q > \frac{L^2}{4m^2}$. 
	Of course, finding $\frac{1}{q} < \frac{4m^2}{L^2}$ in general is not easier than finding $\delta t<\frac{2}{L}$.
	But if $2m^2>L$, then $ \frac{4m^2}{L^2}>\frac{2}{L}$ implies $\frac{1}{q} < \frac{4m^2}{L^2}$ is easier to achieve.  
\end{remark}

\begin{remark}
	As an example of the second sufficient condition, if we pick $q_k\equiv \frac12$, and $a=2$, then $\frac12>\frac{1}{2}\frac{L^2}{4m^2}\Leftrightarrow L<2m$ is sufficient to ensure 
	the SAV gradient descent method with $q_k\equiv \frac12$ is decreasing with $\delta t =\frac{4}{L}$, instead of $\delta t<\frac{2}{L}$ in \eqref{Vanilla-GD}. 
\end{remark}

\begin{proof}
	
	First, by the strong convexity and Lipschitz continuity, we have
	 \[f(x)\geq f(y)+( \nabla f(y), x-y) +\frac{m}{2}\|x-y\|^2,\]
\[f(x)\leq f(y)+( \nabla f(y), x-y) +\frac{L}{2}\|x-y\|^2.\]
	 
	Since $\theta_*$ is the minimizer, $\nabla f(\theta_*)=0$. For any $\theta$, we have
	\[ ( \nabla f(\theta)-\nabla f(\theta_*), \theta-\theta_*)\geq m\|\theta-\theta_*\|^2\Rightarrow ( \nabla f(\theta), \theta-\theta_*)\geq m\|\theta-\theta_*\|^2\]
	\[\Rightarrow m \frac{\|\theta-\theta_*\|}{\|\nabla f(\theta)\|}=\frac{( \nabla f(\theta), \theta-\theta_*)}{\|\theta-\theta_*\|\|\nabla f(\theta)\|}\leq 1\Rightarrow \|\nabla f(\theta)\|\geq m\|\theta-\theta_*\|.\]
	Hence,
	\[   m\|\theta-\theta_*\|\leq \|\nabla f(\theta)\|\leq L\|\theta-\theta_*\|,\]
	and
	\[   \frac {m}{2}\|\theta-\theta_*\|^2\leq f(\theta)-f(\theta_*)\leq \frac {L}{2}\|\theta-\theta_*\|^2.\]
	With strong convexity $m>0$, we have
	\[   \frac{2m^2}{L}\leq \frac{\|\nabla f(\theta)\|^2}{f(\theta)-f(\theta_*)}\leq \frac{2L^2}{m}\]
	thus 
	\[   \frac{2m^2}{L}\leq \frac{\|\nabla f(\theta)\|^2}{f(\theta)}\leq \frac{2L^2}{m}.\]
	Finally,
	
	\[ \frac{f(\theta_k)}{q_k  \| \nabla f(\theta_k)\|^2}<\frac{2}{L}\Leftrightarrow \frac{1}{q_k}< \frac{\|\nabla f(\theta_k)\|^2}{f(\theta_k)}\frac{2}{L}\Leftarrow \frac{1}{q_k}< \frac{4m^2}{L^2}.\]
\end{proof}

\begin{remark}
	In general, if $f(\theta)$ is only convex but not strong convex, i.e., $m=0$, and $f(\theta)+ C>0$, then we only have
	\[\frac{\|\nabla f(\theta)\|^2}{f(\theta)+C} \leq \frac{L^2\|\theta-\theta_*\|^2}{  f(\theta_*)+C}.\]
	This gives a lower bound control on step size:

	\[\alpha_k=\frac{\delta t}{1+\delta t q_k \frac{ \| \nabla f(\theta_k)\|^2}{f(\theta_k)+C}}\geq \frac{\delta t}{1+q_k\delta t\frac{L^2\|\theta_k-\theta_*\|^2}{  f(\theta_*)+C}}.\]

	In this case,  we can set $q_k\equiv q$ and do back tracking on $\delta t$ for $\alpha_k$ to satisfy the convergence condition or Goldstein-Armijo rule.
\end{remark}

\subsection{The step size for quadratic functions}
\label{sec-quadratic}
To see why the step size $\alpha_k=\frac{\delta t_k}{1+\delta t_k q_k \frac{ \| \nabla f(\theta_k)\|^2}{f(\theta_k)+C}}$ could be a good step size to use, at least for a quadratic cost function, consider a 
cost function $f(\theta)=\frac12\|A\theta-b\|^2$ with a square and positive definite matrix $A>0$. The steepest descent algorithm can be written as
\[\theta_{k+1}=\theta_k-\beta_k\nabla f(\theta_k),\quad \beta_k=\frac{\|A^T(A\theta_k-b)\|^2}{[A^T(A\theta_k-b)]^T A^TA [A^T(A\theta_k-b)]}.\]
The method \eqref{explicitiSAV-GD} with $C=0$ is 
\[\theta_{k+1}=\theta_k-\alpha_k\nabla f(\theta_k),\quad \alpha_k=\frac{1}{\frac{1}{\delta t_k}+q_k\frac{\|A^T(A\theta_k-b)\|^2}{\frac12 (A\theta_k-b)^T (A\theta_k-b)}}.\]
Then for a very large $\delta t_k$ and $q_k\equiv\frac12$, we have 
\[\alpha_k\approx \frac{(A\theta_k-b)^T (A\theta_k-b)}{(A\theta_k-b)^T A A^T(A\theta_k-b)},\quad \beta_k=\frac{(A\theta_k-b)^TAA^T (A\theta_k-b)}{(A\theta_k-b)^T AA^T A A^T(A\theta_k-b)}.\]
Let $v_i$ be orthornormal eigenvectors of $A$ with eigenvalues of $\lambda_i$. 
Since $v_i$ form a basis for $\mathbbm R^N$,
let $A\theta_k-b=r=\sum_i r_i v_i$. Let $z=A^T(A\theta_k-b)$, then $z=A^T\sum_i r_i v_i=\sum_i r_i \lambda_i v_i$
and $Az=\sum_i r_i \lambda^2_i v_i$ .
We get
\[\alpha_k\approx \frac{r^T r}{ r^TA A^Tr}=\frac{r^T r}{ z^T z}=\frac{[\sum_i r_i v_i]^T[\sum_i r_i v_i]}{[\sum_i r_i \lambda_i v_i]^T[\sum_i r_i \lambda_i v_i]}=\frac{\sum_i r_i^2}{\sum_i \lambda_i^2 r_i^2}, 
\quad\beta_k=\frac{z^T z}{(Az)^T (Az)}=\frac{\sum_i  \lambda_i^2 r_i^2}{\sum_i \lambda_i^4 r_i^2} .\]
We can see that $\alpha_k$ is very similar to the optimal step size $\beta_k$ but not the same. 
In practice, a random initial guess $\theta_0$ usually makes $\alpha_k$ a descent step size in the first few or many iterations for $\delta t_k\equiv 1$
and $q_k\equiv q=\frac12$.

\section{Concluding remarks}\label{sec-remark}
 
We proposed in this paper a new  minimization algorithm inspired by the  scalar auxiliary variable (SAV) approach for gradient flows. Since the direct application of the SAV approach to  minimization problems may converge to wrong solutions,  we developed a  modified version of the SAV  approach  coupled with a relaxation  step and an adaptive stradegy. The new algorithm enjoys several distinct advantages, including  unconditionally energy  diminishing with a  modified energy, and  {empirical better performance than many first order methods}. In particular, it overcomes the difficulty in selecting proper step sizes  associated with the usual gradient based algorithms. The energy diminishing property  ensures the convergence, and the relaxation step, built on a connection between the decreasing modified energy and the original energy, {helps to accelerate the convergence}.

We also presented a converence analysis for {some SAV based algorithms} which include the new algorithm without the relaxation step as a special case.
Numerical results for several illustrative and benchmark problems indicates that the new algorithm is very robust and usually converges significantly faster than those popular gradient decent based methods.  

While we only considered a basic version of  the SAV based approach which already showed its promise, it is clear that this approach can be combined with other techniques of acceleration and generalization  to the gradient decent methods. How to further improve the robustness and accelerate the convergence rate of the SAV based approach will be the subject of a future study.

\section*{Acknowledgement}
{This work is partially supported by AFOSR FA9550-16-1-0102,  NSF DMS-2012585 and DMS-2208518.}
 
 \bibliographystyle{plain}
 \bibliography{reference}

\end{document}